\newtheorem{definition}{Definition}
\newtheorem{lemma}{Lemma}
\newtheorem{theorem}{Theorem}
\newtheorem{proposition}{Proposition}
\newtheorem{conjecture}{Conjecture}
\newcounter{x}
\newcounter{y}
\newcounter{x2}
\newcounter{y2}
\newcommand{\vertex}[4][black,]{
    \filldraw[#1] (#2, #3) circle (3pt) node[anchor=west]{#4};
}
\newcommand{\tikzbox}[4][thick,-]{
    \setcounter{x}{#2}
    \setcounter{y}{#3}
    \setcounter{x2}{#2 + #4}
    \setcounter{y2}{#3 + #4}
    
    \draw[#1] (\arabic{x},\arabic{y}) -- (\arabic{x2},\arabic{y2});
    
    \setcounter{y}{#3 + #4 + #4}
    \draw[#1] (\arabic{x2},\arabic{y2}) -- (\arabic{x},\arabic{y});
    
    \setcounter{x2}{#2 - #4}
    \draw[#1] (\arabic{x},\arabic{y}) -- (\arabic{x2},\arabic{y2});
    
    \setcounter{y}{#3}
    \draw[#1] (\arabic{x2},\arabic{y2}) -- (\arabic{x},\arabic{y});
}
\newcommand{\grid}[3][step=1cm, gray, very thin,]{
    \draw[#1] (-0.9,-0.9) grid (#2 + 0.9, #3 + 0.9);
}
\newcommand{\tower}[5][thick, black]{
    \tikzbox{#2}{#3-#4}{#4};
    \vertex{#2}{#3}{#5};
}
\newcommand{\halfsquaregrid}[3][step=1cm]{
    \clip (-0.9, -0.9) rectangle (#2 + 0.9, #3 + 0.9);
    \grid[gray, thin]{#2}{#3};
    \draw[rotate=45, shift={(-0.707*#2-2*0.707, -0.707*#2-2*0.707)}, dotted, thin, gray, scale=0.707]  (0,0) grid (2*#2+2*#3+2, 2*#2+2*#3+2);
}
\newcommand{\horizhalfsquare}[5][black, fill=white, thick]{
    \draw[#1, rotate around={45: (#2, #3)}] (#2, #3) rectangle (#2 + 0.707, #3 + 0.707);
}
\newcommand{\verthalfsquare}[5][black, fill=white, thick]{
    \draw[#1, rotate around={-45: (#2, #3)}] (#2, #3) rectangle (#2 + 0.707, #3 + 0.707);
}
\newcommand{\li}[5][thick]{
    \draw [#1,thick] (#2, #3) -- (#4, #5);
}
\newcommand{\holehsvert}[3][fill=lightgray, thick]{
    \fill [#1, rotate around={-45: (#2, #3)}] (#2, #3) rectangle (#2 + 0.707, #3 + 0.707);
}
\newcommand{\holehshoriz}[3][fill=lightgray, thick]{
    \fill [#1, rotate around={45: (#2, #3)}] (#2, #3) rectangle (#2 + 0.707, #3 + 0.707);
}
\newcommand{\hole}[5][fill=lightgray, thick]{
    \fill [#1, rotate around={-45: (#2, #3)}] (#2, #3) rectangle (#2 + 0.707*#5, #3 + 0.707*#4);
}
\newcommand{\redoutline}[5][red, thick]{
    \draw [#1, rotate around={-45: (#2, #3)}] (#2+0.03, #3-0.03) rectangle (#2+0.707*#5-0.03, #3+0.707*#4-0.03);
}
\newcommand{\redsquare}[2]{
    \redoutline{#1}{#2}{4}{4};
}
\newcommand{\smredsquare}[2]{
    \redoutline[red,thick]{#1+0.1}{#2}{4-0.14}{4-0.14};
}
\newcommand{\ohalfsquare}[2]{
    \fill [orange, rotate around={-45: (#1, #2)}] (#1, #2) rectangle (#1 + 0.707, #2 + 0.707);
}
\newcommand{\yhalfsquare}[2]{
    \fill [yellow, rotate around={-45: (#1, #2)}] (#1, #2) rectangle (#1 + 0.707, #2 + 0.707);
}
\newcommand{\figeleven}[1]{
    \hole{2+#1}{4}{2}{1}
    \redoutline{0+#1}{2}{4}{2};
    \redoutline{3+#1}{5}{4}{2};
    \li[]{2+#1}{4}{2.5+#1}{3.5}
    \li[]{3+#1}{5}{3.5+#1}{4.5}
    \li[]{2.5+#1}{3.5}{3.5+#1}{4.5}
    \vertex[]{2+#1}{4}{}
    \vertex[]{3+#1}{5}{}
    \vertex[]{3+#1}{4}{$v$}
}
\newcommand{\figendbox}[1]{
        \hole[lightgray]{2+#1}{4}{1}{1}
        \li[]{2+#1}{4}{2.5+#1}{3.5}
        \li[]{2.5+#1}{3.5}{3+#1}{4}
        \li[]{2+#1}{4}{2.5+#1}{4.5}
        \vertex[]{2+#1}{4}{}
        \vertex[]{3+#1}{4}{}
        \node at (2+#1,3.5) {$e$};
        \node at (3+#1,3.5) {$f$};
}
\newcommand{\figendboxthree}[1]{
        \redsquare{-1+#1}{3}
        \redsquare{2+#1}{3}
        \hole[lightgray]{2+#1}{4}{3}{1}
        \li[]{2+#1}{4}{2.5+#1}{3.5}
        \li[]{2.5+#1}{3.5}{4+#1}{5}
        \li[]{2+#1}{4}{3.5+#1}{5.5}
        \vertex[]{2+#1}{4}{}
        \vertex[]{3+#1}{4}{$v$}
        \vertex[]{3+#1}{5}{}
        \vertex[]{4+#1}{5}{}
}
\newcommand{\rbracket}[3]{
    \li[blue, ultra thick]{#1}{#2}{#1+1.5}{#2-0.5};
    \li[blue, ultra thick]{#1+1.5}{#2-0.5}{#1+1}{#2+1};
    \node[blue] at (#1+1.75,#2-0.75) {#3};
}
\newcommand{\lbracket}[3]{
    \li[blue, ultra thick]{#1}{#2}{#1-0.5}{#2+1.5};
    \li[blue, ultra thick]{#1-0.5}{#2+1.5}{#1+1}{#2+1};
    \node[blue] at (#1-0.75,#2+1.5) {#3};
}
\begin{document}

\title{Optimal $(t,r)$ Broadcasts On the Infinite Grid}

\author{Benjamin F. Drews}
\address{Department of Mathematics and Statistics, Williams College, United States}
\email{bfd2@williams.edu}
\thanks{}

\author{Pamela E. Harris}
\address{Department of Mathematics and Statistics, Williams College, United States}
\email{peh2@williams.edu}
\thanks{P.\,E. Harris was supported by NSF award DMS-1620202.}

\author{Timothy W. Randolph}
\address{Department of Mathematics and Statistics, Williams College, United States}
\email{twr2@williams.edu}
\thanks{}

\keywords{Domination, Broadcasts, Grid graphs}
\date{\today}

\maketitle
\begin{abstract}
Let $G=(V,E)$ be a graph and $t,r$ be positive integers. The \emph{signal} that a vertex $v$ receives from a tower of signal strength $t$ located at vertex $T$ is defined as $sig(v,T)=max(t-dist(v,T),0),$ where $dist(v,T)$ denotes the distance between the vertices $v$ and $T$. In 2015 Blessing, Insko, Johnson, and Mauretour defined a \emph{$(t,r)$ broadcast dominating set}, or simply a \emph{$(t,r)$ broadcast}, on $G$ as a set $\mathbb{T}\subseteq V$ such that the sum of all signal received at each vertex $v \in V$ is at least $r$. We say that $\mathbb{T}$ is \emph{optimal} if $|\mathbb{T}|$ is minimal among all such sets $\mathbb{T}$. The cardinality of an optimal $(t,r)$ broadcast on a finite graph $G$ is called the $(t,r)$ broadcast domination number of $G$. The concept of $(t,r)$ broadcast domination generalizes the classical problem of domination on graphs. In fact, the $(2,1)$ broadcasts on a graph $G$ are exactly the dominating sets of $G$. 

In their paper, Blessing et al. considered $(t,r)\in\{(2,2),(3,1),(3,2),(3,3)\}$ and gave optimal $(t,r)$ broadcasts on $G_{m,n}$, the grid graph of dimension $m\times n$, for small values of $m$ and $n$. They also provided upper bounds on the optimal $(t,r)$ broadcast numbers for grid graphs of arbitrary dimensions. In this paper, we define the \emph{density} of a $(t,r)$ broadcast, which allows us to provide optimal $(t,r)$ broadcasts on the infinite grid graph for all $t\geq2$ and $r=1,2$, and bound the density of the optimal $(t,3)$ broadcast for all $t\geq2$. In addition, we give a family of counterexamples to the conjecture of Blessing et al. that the optimal $(t,r)$ and $(t+1, r+2)$ broadcasts are identical for all $t\geq1$ and $r\geq1$ on the infinite grid.
\end{abstract}

\section{Introduction}


Let $G$ be a graph with vertex set $V$ and edge set $E$. Then the domination number of $G$, denoted $\delta(G)$, is the cardinality of the smallest set $B \subseteq V$, such that every vertex $v \in V$ is adjacent to at least one vertex $b \in B$. The problem of determining the domination number of \emph{finite grid graphs}  of dimension $n\times m$,  denoted $G_{m,n} = (V,E)$ with
\begin{align*}
V &= \{v_{i, j} \ :\ 1\leq i\leq n, 1\leq j\leq m\} , \\
E &= \{(v_{i,j}, v_{i+1,j}), (v_{i,j}, v_{i,j+1}) \ :\ 1 \leq i < m, 1 \leq j < n \} ,
\end{align*}
began in 1984 with the work of Jacobson and Kinch who established $\delta(G_{m,n})$ for all $n\geq1$ and $m=2,3,4$ \cite{jacobson1984domination}. This work was extended to all $n\geq1$ and $m=5,6$ in 1993 by Chang and Clark \cite{chang1993domination}. In his PhD thesis, Chang conjectured \cite{chang1992domination} that for every $16 \leq n \leq m$, 
\[ \delta(G_{m,n}) = \left \lceil{\frac{(n+2)(m+2)}{5}}\right \rceil.\]
In 2011, Gon{\c{c}}alves, Pinlou, Rao, and Thomass{\'e} confirmed Chang's conjecture thereby establishing the domination numbers of $G_{m,n}$ for all $n\geq1$ and $m \geq 6$ \cite{gonccalves2011domination}.


Since this foundational work, many have explored variations of the graph domination problem by considering vertices that ``dominate" more than their immediate neighbors. In 1990, Griggs and Hutchinson defined the \emph{$r$-domination number} of a graph $G$ as the cardinality of the smallest set $B \subseteq V$, such that for all vertices $v \in V$ there exists a vertex $b \in B$ with $dist(v, b) \leq r$ \cite{griggs1992r}. In 2002, Dunbar, Erwin, Haynes, Hedetniemi and Hedetniemi introduced the notion of broadcasts on graphs. A \emph{broadcast} is a function that assigns a signal strength $t_i\in\mathbb{Z}_{\geq0}$ to each vertex $v_i$ in a graph, and a dominating broadcast is one that ensures that for every vertex $v_i \in V$, there exists a vertex $v_j \in V$, such that $t_j - dist(v_i, v_j) > 0$ \cite{dunbar2006broadcasts}. For our purposes, we refer to each vertex $v_i \in V$ with $t_i > 0$ as a \emph{tower} of signal strength $t$. The \emph{cost} of a broadcast is defined as $\Sigma_i \ t_i$, and the $r$-domination number of a graph is proportional to the minimum cost of a broadcast in which all towers have signal strength $r$.

The problem of finding minimal cost broadcasts with various constraints on signal strength has received substantial attention in recent years. In 2014 and 2015, Koh and Soh considered the broadcast domination numbers of graph products of paths and torii \cite{soh2014broadcast, soh2015broadcast}. In 2013, Fata, Smith, and Sundaram established upper bounds on minimum broadcast costs with fixed signal strength for $G_{m, n}$, and these bounds were subsequently improved by Grez and Farina \cite{fata2013distributed, grez2014new}.

In 2015, Blessing, Insko, Johnson, and Mauretour generalized the graph domination problem further by introducing the concept of a $(t,r)$ broadcast on a graph for positive integers $t$ and $r$ \cite{blessing2015t}. They define the \emph{signal} that a vertex $v$ receives from a tower $T$ of strength $t$ as $max(t - dist(T, v), 0)$ and a $(t,r)$ broadcast as a set of towers $\mathbb{T} \subseteq V$, such that the sum of all signal received at each vertex is at least $r$.  A $(t,r)$ broadcast is \emph{optimal} if $|\mathbb{T}|$ is minimized, and on finite graphs the cardinality of an optimal $(t,r)$ broadcast $\mathbb{T}$ is called the $(t,r)$ broadcast domination number. The problem of finding an optimal $(t,r)$ broadcast specializes the optimal broadcast problem by holding signal strength constant and generalizes it by specifying a minimum amount of signal required by each vertex in the graph. We remark that the $(2,1)$ broadcast domination problem is equivalent to the classical domination problem.

This paper continues the work of Blessing et al. by extending the notion of an optimal $(t,r)$ broadcast to the integer lattice $\mathbb{Z}\times\mathbb{Z}$, which we refer to as the infinite grid and denote by $G_\infty$. To extend the notion of optimality to $G_\infty$, we generalize the concept of a $(t,r)$ broadcast domination number by considering the \emph{density} of a broadcast, defined intuitively as the proportion of the vertices of $G_{\infty}$ contained in an infinite broadcast $\mathbb{T}$. To make this notion precise, we give the following definitions.

Given a $(t,r)$ broadcast $\mathbb{T}$ on $G_\infty$, consider the vertex set $V$ of the subgraph $G_{2n+1, 2n+1}$ with its central vertex located at $(0, 0)$. Then the broadcast density of a $(t,r)$ broadcast on $G_\infty$ is defined as $\lim_{n \to \infty} \frac{|\mathbb{T} \cap V|}{|V|}$.
%
Given any graph $G$ and fixed positive integers $t$, $r$, the \emph{optimal density} of a $(t,r)$ broadcast on $G$, denoted $\delta_{t,r}(G)$, is defined as the minimum broadcast density over all $(t,r)$ broadcasts. We say that a $(t,r)$ broadcast is optimal if its density is optimal. 

With these definitions at hand, we now state our main results.


\newtheorem*{theorem:r=1}{Theorem \ref{thm_r=1}}
\begin{theorem:r=1}
 If $t\geq1$, then $\delta_{t,1}(G_{\infty}) = \frac{1}{2t^2 - 2t + 1}$.
\end{theorem:r=1}

\newtheorem*{theorem:r=2}{Theorem \ref{thm_r=2}}
\begin{theorem:r=2}
If $t>2$, then $\delta_{t,2}(G_{\infty}) = \frac{1}{2(t-1)^2}$.
\end{theorem:r=2}

\newtheorem*{theorem:conj}{Theorem \ref{thm_conj}}
\begin{theorem:conj}
If $t>2$, then $\delta_{t,3}(G_\infty) \leq \delta_{t-1,1}(G_\infty)$.
\end{theorem:conj}

Theorem 3 supports the conjecture of Blessing et al. that the optimal $(t,r)$ and $(t+1,r+2)$ broadcasts are identical in the special case $t>2$ and $r=1$. However, as we show, the conjecture is false in general. 

This paper is organized as follows. In Section 2, we define the working notions of signal and excess and introduce key concepts necessary to investigate optimal $(t, r)$ broadcasts on $G_\infty$. In Section 3, we prove Theorems \ref{thm_r=1} and \ref{thm_r=2}. In Section 4, we prove Theorem 3, demonstrate that  $\delta_{1,1}(G_\infty)\neq\delta_{2,3}(G_\infty)$, and provide a family of counterexamples showing that, in general, $\delta_{t,r}(G_\infty)\neq\delta_{t+1,r+2}(G_\infty)$ when $r>1$. We end by presenting the following revised conjecture.

\begin{conjecture}\label{conj_t=3}
For all $t>2$, $\delta_{t,3}(G_\infty) = \delta_{t-1,1}(G_\infty)$.
\end{conjecture}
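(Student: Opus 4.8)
The plan is to establish the conjecture by proving the reverse of the inequality already supplied by Theorem 3. Since Theorem 3 gives the upper bound $\delta_{t,3}(G_\infty)\le\delta_{t-1,1}(G_\infty)$, it suffices to prove the matching lower bound $\delta_{t,3}(G_\infty)\ge\delta_{t-1,1}(G_\infty)$. Applying Theorem 1 with parameter $t-1$, the target takes the explicit form $\delta_{t-1,1}(G_\infty)=\frac{1}{2(t-1)^2-2(t-1)+1}=\frac{1}{2t^2-6t+5}$, so concretely I would aim to show that \emph{every} $(t,3)$ broadcast on $G_\infty$ has density at least $\frac{1}{2t^2-6t+5}$.

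My first approach is a \emph{strength-reduction} map. Given an arbitrary $(t,3)$ broadcast $\mathbb{T}$, I would consider the \emph{same} tower set broadcasting at the reduced strength $t-1$ and argue that this is almost always a valid $(t-1,1)$ broadcast. The key observation is that the reduced-strength signal at a vertex $v$ is at least $1$ as soon as some tower lies within distance $t-2$ of $v$, since such a tower alone contributes at least $(t-1)-(t-2)=1$. Hence the reduction can fail only at an \emph{exceptional} vertex having no tower within distance $t-2$; and for such a vertex the $(t,3)$ requirement forces at least three towers at distance exactly $t-1$ (each contributing exactly $t-(t-1)=1$ to the original signal) and none nearer. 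If exceptional vertices never occur, or can be removed by a local modification of $\mathbb{T}$ that does not increase its density, then $\mathbb{T}$ at strength $t-1$ is a genuine $(t-1,1)$ broadcast, yielding $\delta_{t-1,1}(G_\infty)\le\delta_{t,3}(G_\infty)$ and therefore equality.

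The main obstacle is precisely these exceptional vertices: a configuration of three towers at distance exactly $t-1$ from a common vertex, with no nearer tower, is not obviously wasteful, so ruling it out (or repairing it at zero density cost) appears to require a global rather than a purely local argument. To make the lower bound robust against such configurations, I would pursue a linear-programming duality argument in parallel. Writing the density problem as minimizing the tower density subject to $\sum_{T}\mathrm{sig}(v,T)\ge 3$ for all $v$, the dual asks for a nonnegative, lattice-periodic weighting $y_v$ of the vertices satisfying $\sum_{v}y_v\,\mathrm{sig}(v,T)\le 1$ for every potential tower location $T$; any feasible weighting certifies $\delta_{t,3}(G_\infty)\ge 3\,\overline{y}$, where $\overline{y}$ denotes the average weight per vertex. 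The uniform weighting recovers only the weak counting bound $\frac{9}{t(2t^2+1)}$, so I would instead concentrate the weight to mirror the extremal diagonal-sublattice configuration of determinant $2t^2-6t+5$ that Theorem 3 identifies, guided by complementary slackness: the weight should be supported on vertices receiving signal exactly $3$, and every tower's dual constraint should be tight.

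The crux of the whole argument---and the reason this remains a conjecture rather than a theorem---is verifying dual feasibility of the concentrated weighting, equivalently eliminating the exceptional three-tower vertices. I expect this verification to reduce to a finite, periodic check in the neighborhood of a single tower of the extremal lattice, but establishing it uniformly in $t$, and reconciling it with the fact that the analogous amortization succeeds for $r=2$ (Theorem 2) yet resists for $r=3$, is where I anticipate the difficulty will concentrate.
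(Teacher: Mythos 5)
There is a genuine gap here, but it is one you have honestly flagged yourself, and it coincides exactly with what the paper leaves open: the statement you were asked to prove is Conjecture~\ref{conj_t=3}, which the paper does \emph{not} prove. The authors establish only the upper bound $\delta_{t,3}(G_\infty)\leq\delta_{t-1,1}(G_\infty)$ (Theorem~\ref{thm_conj}, by checking that $\mathbb{T}^*_{t-1,1}$ at strength $t$ is a $(t,3)$ broadcast) and explicitly remark that a direct proof of tightness ``is not accessible to the methods previously employed in this paper,'' offering only computational evidence from standard broadcasts. Your reduction of the problem to the lower bound $\delta_{t,3}(G_\infty)\geq\frac{1}{2t^2-6t+5}$ is correct (and your arithmetic checks out, including the weak counting bound $\frac{9}{t(2t^2+1)}$ from a tower's total emitted signal $\frac{t(2t^2+1)}{3}$), but neither of your two strategies is carried to completion, so what you have is a research plan rather than a proof.

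Concretely, the failure point you identify is real and is plausibly \emph{the} obstruction. The strength-reduction map breaks precisely at vertices whose required signal $3$ is assembled as $1+1+1$ from three towers all at distance exactly $t-1$; at strength $t-1$ each of these contributes $0$. Note that in the extremal pattern $\mathbb{T}^*_{t-1,1}$ no such vertex exists (every vertex lies within distance $t-2$ of a tower), so the question is whether an adversarial broadcast can exploit triple-corner coverage to beat density $\frac{1}{2t^2-6t+5}$ --- and you give no repair argument, local or amortized, ruling this out. Your LP-duality route likewise stops short of producing a feasible dual certificate; ``guided by complementary slackness'' describes a target, not a construction. One further caution: the paper's counterexamples (Proposition~\ref{prop:11_counterexample} and Table~\ref{conjecturefalsetable}, e.g.\ standard $(t+2,5)$ broadcasts strictly beating optimal $(t,1)$ broadcasts) show that generic signal-amortization arguments of the kind that succeed for $r=2$ must fail for nearby parameter shifts, so any completion of your plan needs to isolate what is special about $r=3$ rather than proving a bound of a shape that would also apply to $r=5$ or $r=7$. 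Your diagnosis of where the difficulty concentrates agrees with the authors' own assessment, but the conjecture remains open under your proposal.
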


\section{Preliminaries}

The signal that a vertex $v$ receives from a tower $T$ with signal strength $t$ is typically defined as $max(t - dist(v, T),0)$, a function that decreases linearly as the distance between $v$ and $T$ increases. However, if $t > r$, vertices near towers receive signal greater than $r$. We refer to this extra signal as \emph{innate excess}. Because including innate excess in calculations is cumbersome and unnecessary in practice, we introduce the following working definitions of signal and excess.

\begin{definition}
\sloppy Let $\mathbb{T}$ be a $(t,r)$ broadcast of $G$. The \emph{signal} vertex $v$ receives from a tower at vertex $T$ with signal strength $t$ is defined as $sig(v,T)=min(max(t-dist(v, T),0), r)$. The \emph{total signal} of $v$ is defined as $sig(v)= \sum_{T \in \mathbb{T}} sig(v,T)$.
\end{definition}

 \begin{definition}
The \emph{excess} of a vertex $v$ is defined as $ex(v) = sig(v) - r$.
\end{definition}

On $G_\infty$, an optimal $(t,r)$ broadcast often consists of a pattern of towers that repeats across the infinite grid. To efficiently refer to such a pattern, we introduce the following.


\begin{definition}
The \emph{standard pattern} is defined by the positive integers $d$ and $e$ as
\[p(d,e) = \{(dx+ey, y) : x,y \in \mathbb{Z}\}.\]
\end{definition}

We use the notation $\mathbb{T}(d,e)$ to refer to the set of vertices on $G_\infty$ corresponding to the standard pattern $p(d,e)$. Note that $\mathbb{T}(d,e)$ is not necessarily a $(t,r)$ broadcast for any values of $t$ and $r$. When $\mathbb{T}(d,e)$ is in fact a broadcast we call it the \emph{standard broadcast}.
Although the standard patterns represent a limited set of potential $(t,r)$ broadcasts, the simplicity of the definition provides certain practical advantages. First, the density of a standard pattern $p(d,e)$ is inversely proportional to $d$. Second, whether or not a vertex set $\mathbb{T}(d,e)$ is a standard broadcast can be checked in time proportional to $d$ \cite{DHRprogram}.

Figures \ref{(3,1)-optimal} and \ref{(2,2)-optimal}  illustrate the standard patterns $p(13,5)$ and $p(3,2)$, respectively. Note that the standard broadcast $\mathbb{T}(d,e)$ has broadcast density $\frac{1}{d}$, as one out of every $d$ vertices on each horizontal path of $G_\infty$ is included in $\mathbb{T}(d,e)$. 

\begin{figure}[h!]
\centering
\begin{tikzpicture}[scale=0.5]
    \grid{6}{6}
    \tower{3}{3}{3}{$T$}
\end{tikzpicture}
\caption{ A tower $T$ with $t=4$ and its broadcast outline.}
\label{fig_broadcastoutline}
\end{figure}
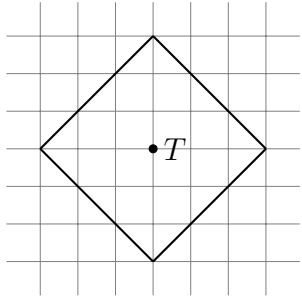

Our optimality proofs depend on the geometric structure of the infinite grid, so we define some additional terms to make our approach precise. Consider the diamond shape formed around a tower $T$ by connecting, with line segments, each vertex in the set $\{v : sig(v,T) = 1\}$ to its two nearest neighbors. We refer to this shape as the \emph{broadcast outline} of $T$. Figure \ref{fig_broadcastoutline} illustrates the broadcast outline of a tower $T$ with $t$=4. Note that the broadcast outline of a tower with signal strength $t$ encloses an area of $2(t-1)^2$ in the infinite grid $\mathbb{Z}\times\mathbb{Z}$.

The union of all grid square diagonals yields a lattice offset from the infinite grid by $45^\circ$. 
We refer to a grid square on this lattice as a \emph{half-square} because every such square enclosed by these lines has area $\frac{1}{2}$, and we refer to this new lattice itself as the \emph{half-square grid}. Because every broadcast outline falls on the half-square grid, we can count areas inside and outside of broadcast outlines using half-squares. Note that every two adjacent vertices in $G_\infty$ uniquely determine a half-square, as illustrated in Figure \ref{halfsquare}.

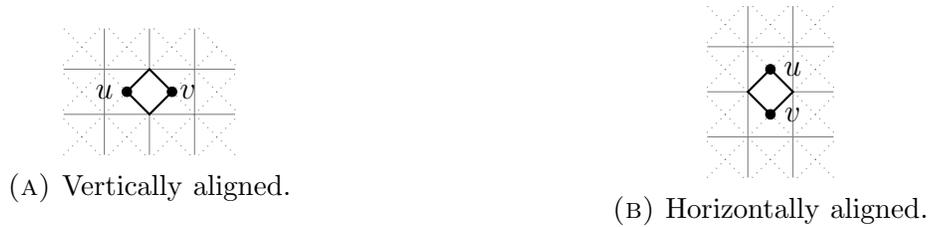
\begin{figure}[h!]
    \centering
    
    \begin{subfigure}{0.5\textwidth}
    \centering
    \begin{tikzpicture}[scale=0.6]
        \halfsquaregrid{2}{1}
        \horizhalfsquare{1}{0}{}{}
        \vertex[black]{0.5}{0.5}{}
        \vertex[black]{1.5}{0.5}{}
        \node at (0, 0.5) {{$u$}};
        \node at (1.75, 0.5) {{$\;v$}};
    \end{tikzpicture}
    \caption { Vertically aligned.}
    \end{subfigure}%
    \begin{subfigure}{0.5\textwidth}
    \centering
    \begin{tikzpicture}[scale=0.6]
        \halfsquaregrid{1}{2}
        \verthalfsquare{0}{1}{}{}
        \vertex[black]{0.5}{0.5}{}
        \vertex[black]{0.5}{1.5}{}
        \node at (0.8, 1.5) {{$\;\;u$}};
        \node at (0.8, 0.5) {{$\;\;v$}};
    \end{tikzpicture}
    \caption{ Horizontally aligned. }
    \end{subfigure}
    
    \caption{Half-squares on the half-square grid, uniquely determined by vertices $u$ and $v$.}
    \label{halfsquare}
\end{figure}

Let $h_{u,v}$ be the half-square determined by the vertices $u$ and $v$. We say that a tower $T$ \emph{covers $h_{u,v}$ to depth $min(sig(u,T), sig(v,T))$}. The \emph{depth of $h_{u,v}$} is the sum of the depth to which it is covered by every tower. We define a \emph{hole of depth $d$} as a region of contiguous half-squares covered to depth $r-d$. With these definitions at hand, we are prepared to prove our results.

\section{Optimal $(t,r)$ broadcasts on the Infinite Grid}

There exists a family of broadcasts with non-overlapping broadcast outlines that cover $G_\infty$ efficiently to depth 1. Hence, proving Theorem \ref{thm_r=1} is straightforward. 

First, observe that a tower $T$ with signal strength $t$ transfers signal to $2t^2 - 2t + 1$  vertices on the infinite grid. This follows from the fact that the vertices covered by $T$ can be grouped into two squares of dimensions $t\times t$ and $(t-1)\times (t-1)$. Figure \ref{fig_twosquarescovered} illustrates such a grouping for $t=3$.

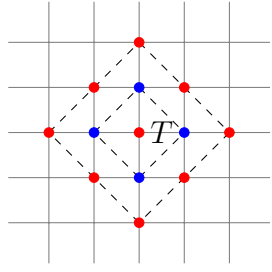
\begin{figure}[h!]
    \begin{center}
        \begin{tikzpicture}[scale = 0.6]
            \grid[gray]{4}{4}
            \draw[dashed] (0,2) -- (2,4);
            \draw[dashed] (2,4) -- (4,2);
            \draw[dashed] (4,2) -- (2,0);
            \draw[dashed] (2,0) -- (0,2);
            \draw[dashed] (2,1) -- (1,2);
            \draw[dashed] (2,1) -- (3,2);
            \draw[dashed] (1,2) -- (2,3);
            \draw[dashed] (2,3) -- (3,2);
            \node at (2.5,2) {$T$};
            \vertex[red]{2}{4}{}
            \vertex[red]{1}{3}{}
            \vertex[red]{3}{3}{}
            \vertex[red]{0}{2}{}
            \vertex[red]{2}{2}{}
            \vertex[red]{4}{2}{}
            \vertex[red]{1}{1}{}
            \vertex[red]{3}{1}{}
            \vertex[red]{2}{0}{}
            \vertex[blue]{2}{3}{}
            \vertex[blue]{1}{2}{}
            \vertex[blue]{3}{2}{}
            \vertex[blue]{2}{1}{}
        \end{tikzpicture}
        \caption{ Vertices that receive signal from a tower of strength $3$, grouped by color into a red $3\times 3$ square and a blue $2\times 2$  square. }
        \label{fig_twosquarescovered}
    \end{center}
\end{figure}

Because in any $(t,1)$ broadcast all vertices $v\in G_\infty$ satisfy $sig(v) \geq 1$ , we have that $\frac{1}{2t^2 - 2t + 1}$ is a lower bound on the optimal $(t,1)$ broadcast density. We will show that the standard broadcast 
\[\mathbb{T}^{*}_{t,1} := \mathbb{T}(2t^2-2t+1,2t-1)\] 
is a $(t,1)$ broadcast of density $\frac{1}{2t^2 - 2t + 1}$, which will establish Theorem \ref{thm_r=1}. For example, Figure \ref{(3,1)-optimal}  illustrates~$\mathbb{T}^{*}_{3,1}$.

\begin{figure}[h!]
    \begin{center}
        \begin{tikzpicture}[scale=0.6]
            \clip (-0.9, -0.9) rectangle (14.9, 6.9);
            \grid[lightgray]{14}{6}
            \tower{-2}{4}{2}{}
            \tower{0}{7}{2}{}
            \tower{-1}{-1}{2}{}
            \tower{1}{2}{2}{}
            \tower{3}{5}{2}{}
            \tower{5}{8}{2}{}
            \tower{4}{0}{2}{}
            \tower{6}{3}{2}{}
            \tower{8}{6}{2}{}
            \tower{13}{7}{2}{}
            \tower{11}{4}{2}{}
            \tower{9}{1}{2}{}
            \tower{7}{-2}{2}{}
            \tower{14}{2}{2}{}
            \tower{12}{-1}{2}{}
            \tower{16}{5}{2}{}
        \end{tikzpicture}
        \caption{ The standard broadcast $\mathbb{T}^{*}_{3,1} = \mathbb{T}(13, 5)$. }
        \label{(3,1)-optimal}
    \end{center}
\end{figure}
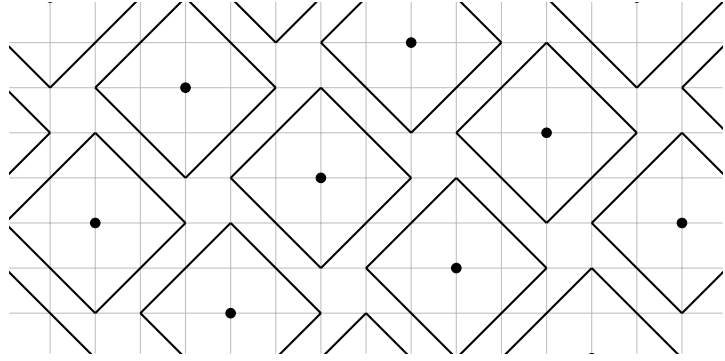

To see why $\mathbb{T}^{*}_{t,1}$ is optimal, we must understand the structure of the standard broadcast. To that end, we establish the following result.

\begin{lemma}
\label{lemma_vectorspace}
The standard pattern $p(2t^2-2t+1,2t-1)$ is the integral linear span of the vectors $\langle t,1-t\rangle$ and $\langle t-1,t\rangle$.
\end{lemma}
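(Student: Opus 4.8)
The plan is to show a set equality between two lattices by establishing containment in both directions, which reduces to elementary linear algebra over $\mathbb{Z}$. Let me denote the standard pattern $P = p(2t^2-2t+1, 2t-1) = \{(dx+ey, y) : x,y \in \mathbb{Z}\}$ with $d = 2t^2-2t+1$ and $e = 2t-1$, and let $L$ be the integral span $\{a\langle t, 1-t\rangle + b\langle t-1, t\rangle : a,b \in \mathbb{Z}\}$. I would first verify that each generator of $L$ lies in $P$: for the vector $\langle t, 1-t\rangle$, I need integers $x,y$ with $y = 1-t$ and $dx+ey = t$; substituting gives $dx = t - e(1-t) = t - (2t-1)(1-t)$, which I would check is divisible by $d$. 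Similarly I would confirm $\langle t-1, t\rangle \in P$ by solving $y = t$ and $dx + et = t-1$. Since $P$ is itself closed under integer linear combinations (it is the image of $\mathbb{Z}^2$ under a linear map), showing the two generators belong to $P$ yields $L \subseteq P$.

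For the reverse containment, the cleanest approach is to compare determinants. Both $P$ and $L$ are sublattices of $\mathbb{Z}^2$, so $L \subseteq P$ together with $[\mathbb{Z}^2 : L] = [\mathbb{Z}^2 : P]$ forces $L = P$. I would compute $[\mathbb{Z}^2 : L]$ as the absolute value of the determinant of the matrix with rows $\langle t, 1-t\rangle$ and $\langle t-1, t\rangle$, namely $|t \cdot t - (1-t)(t-1)| = |t^2 + (t-1)^2| = 2t^2 - 2t + 1 = d$. For $P$, the map $(x,y) \mapsto (dx+ey, y)$ has determinant $d$, so $[\mathbb{Z}^2 : P] = d$ as well. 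Matching indices then upgrades the inclusion $L \subseteq P$ to equality.

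The main obstacle, such as it is, will be the arithmetic verification that the generators of $L$ actually lie in $P$, i.e.\ that the relevant quantities are divisible by $d = 2t^2-2t+1$. Expanding $t - (2t-1)(1-t) = t - (2t - 2t^2 - 1 + t) = 2t^2 - 2t + 1 = d$ shows $x = 1$ works for the first generator, and a similar expansion $(t-1) - (2t-1)t = -(2t^2 - 2t + 1) = -d$ shows $x = -1$ works for the second; both go through cleanly because the defining parameters were chosen precisely so that $d = t^2 + (t-1)^2$ and $e = 2t-1$. I would present these two divisibility checks explicitly and then invoke the determinant/index argument to finish. An alternative that avoids the index comparison would be to directly solve for $a, b$ in terms of a general point $(dx+ey, y) \in P$ and show they are integers, but the determinant approach is shorter and makes the role of the constant $2t^2-2t+1$ transparent.
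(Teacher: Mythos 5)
Your proof is correct, and it in fact does strictly more than the paper's own proof. The paper's argument consists only of your first step: it verifies, with the same explicit witnesses you found ($x=1$, $y=1-t$ for $\langle t,1-t\rangle$ and $x=-1$, $y=t$ for $\langle t-1,t\rangle$ --- the paper misprints the latter as ``$x=1$'' and writes the resulting point as $(1-t,t)$ rather than $(t-1,t)$, a typo your computation silently corrects), and then closes under integer combinations, thereby establishing only the containment $L \subseteq P$ of the span in the pattern. It never addresses the reverse containment $P \subseteq L$, even though the lemma asserts set equality; this omission is harmless for the application in Theorem \ref{thm_r=1}, where only the fact that every point of the span carries a tower is used, but as a proof of the lemma as stated it is incomplete. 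Your determinant/index argument supplies exactly the missing direction: $[\mathbb{Z}^2 : L] = |t^2 + (t-1)^2| = 2t^2 - 2t + 1 = d$, while the basis matrix of $P$ is triangular with diagonal entries $d$ and $1$, so $[\mathbb{Z}^2 : P] = d$ as well, and multiplicativity of the index then forces $[P : L] = 1$, i.e.\ $L = P$. What the paper's shorter route buys is brevity adequate to its downstream use; what yours buys is the lemma as literally stated, with the constant $2t^2-2t+1 = t^2 + (t-1)^2$ explained transparently as the common covolume of the two lattices.
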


\begin{proof}
\sloppy By the definition of a standard pattern, $(i,j) \in p(2t^2-2t+1,2t-1)$ if ${i=(2t^2-2t+1)x+(2t-1)y}$ for some $x,y \in \mathbb{Z}$. Letting $x=1$, $y=1-t$, we have that $i=t$ and thus $(t,1-t)\in p(2t^2-2t+1,2t-1)$. Letting $x=1$, $y=t$, we have that $i=t-1$ and thus $(1-t,t)\in p(2t^2-2t+1,2t-1)$. Moreover, as a consequence of the fact that $(t,1-t), (t-1,t) \in p(2t^2-2t+1,2t-1)$, 
$(at+b(1-t),a(t-1)+bt) \in p(2t^2-2t+1,2t-1)$
for all $a, b \in \mathbb{Z}$. 
\end{proof}

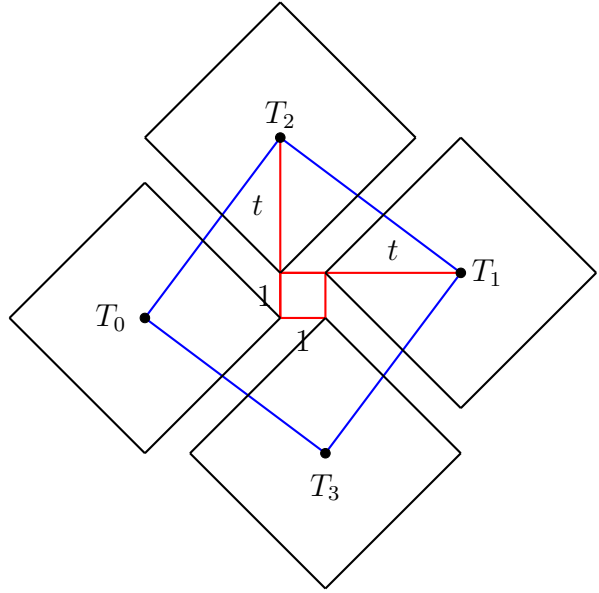
\begin{figure}[h!]
    \begin{center}
        \begin{tikzpicture}[scale = 0.6]
            \draw[-, thick, blue](2,5) -- (6,2);
            \draw[-, thick, blue](5,9) -- (9,6);
            \draw[-, thick, blue](2,5) -- (5,9);
            \draw[-, thick, blue](6,2) -- (9,6);
            \draw[-, thick, red](5,5) -- (5,6);
            \draw[-, thick, red](5,5) -- (6,5);
            \draw[-, thick, red](5,6) -- (6,6);
            \draw[-, thick, red](6,5) -- (6,6);
            \draw[-, thick, red](5,9) -- (5,5);
            \draw[-, thick, red](5,6) -- (9,6);
            \node at (4.5, 7.5) {$t$};
            \node at (7.5, 6.5) {$t$};
            \node at (4.65, 5.5) {1};
            \node at (5.5, 4.5) {1};
            \tower{2}{5}{3}{}
            \node at (1.25,5) {{$T_0$}};
            \tower{9}{6}{3}{$T_1$}
            \tower{5}{9}{3}{}
            \node at (5,9.5) {{$T_2$}};
            \tower{6}{2}{3}{}
            \node at (6,1.25) {{$T_3$}};
        \end{tikzpicture}
        \caption{ A square region bounded by four towers under the broadcast $\mathbb{T}^{*}_{t,1}$. Every vertex within the region is covered to depth at least $1$. }
        \label{fig_r=1proof}
    \end{center}
\end{figure}

\begin{theorem}
\label{thm_r=1}
 If $t\geq1$, then $\delta_{t,1}(G_{\infty}) = \frac{1}{2t^2 - 2t + 1}$.
\end{theorem}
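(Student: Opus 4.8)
The plan is to establish matching lower and upper bounds on the density. For the lower bound, recall that a tower of strength $t$ transmits signal at least $1$ to exactly $2t^2-2t+1$ vertices, namely the $L^1$-ball of radius $t-1$ grouped into the two squares of Figure \ref{fig_twosquarescovered}. In any $(t,1)$ broadcast every vertex of $G_\infty$ must receive total signal at least $1$, hence must be covered by at least one tower. A counting argument on the central subgraph $G_{2n+1,2n+1}$ then forces the density to be at least $\frac{1}{2t^2-2t+1}$: each of the $|V|$ vertices needs a covering tower, each tower covers at most $2t^2-2t+1$ vertices, and the towers relevant to $V$ all lie within a collar of bounded width whose contribution is negligible as $n\to\infty$. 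Thus $\delta_{t,1}(G_\infty)\ge \frac{1}{2t^2-2t+1}$.

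For the upper bound I would exhibit the standard broadcast $\mathbb{T}^{*}_{t,1}=\mathbb{T}(2t^2-2t+1,2t-1)$, which by the density formula for standard broadcasts has density exactly $\frac{1}{2t^2-2t+1}$. It then remains only to verify that $\mathbb{T}^{*}_{t,1}$ is genuinely a $(t,1)$ broadcast, i.e. that every vertex receives signal at least $1$. By Lemma \ref{lemma_vectorspace}, the towers form the integral span of $\langle t,1-t\rangle$ and $\langle t-1,t\rangle$. A short computation shows these generators are orthogonal and of equal Euclidean length $\sqrt{2t^2-2t+1}$, so the towers sit on a (rotated) square lattice whose fundamental square, bounded by four towers as in Figure \ref{fig_r=1proof}, has area $2t^2-2t+1$.

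The crux is a tiling argument. The set of vertices a single tower covers to depth at least $1$ is exactly the $L^1$-ball of radius $t-1$, and I claim the translates of this ball by the tower lattice tile $\mathbb{Z}^2$. Each generator has $L^1$-norm $t+(t-1)=2t-1$, and I would check that $2t-1$ is in fact the minimal $L^1$-norm of a nonzero lattice vector. Since two $L^1$-balls of radius $t-1$ are disjoint precisely when their centers are at $L^1$-distance at least $2(t-1)+1=2t-1$, this minimality shows all lattice translates of the ball are pairwise disjoint. Because the ball contains $2t^2-2t+1$ vertices and the lattice has index $2t^2-2t+1$ in $\mathbb{Z}^2$ (the determinant of the generator matrix), disjointness upgrades to a perfect tiling: the ball is a complete set of coset representatives, so every vertex lies in exactly one translate and is therefore covered to depth at least $1$. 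This shows $\mathbb{T}^{*}_{t,1}$ is a valid $(t,1)$ broadcast and yields $\delta_{t,1}(G_\infty)\le\frac{1}{2t^2-2t+1}$, matching the lower bound.

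I expect the main obstacle to be this tiling step, and specifically the verification that no nonzero integer combination $a\langle t,1-t\rangle+b\langle t-1,t\rangle$ has $L^1$-norm smaller than $2t-1$; the orthogonality and equal length of the generators make this plausible, but a clean case analysis on the signs of the two coordinates is where the real work lies. An alternative that avoids short-vector estimates is to argue directly, as suggested by Figure \ref{fig_r=1proof}, that the four towers bounding a fundamental square already cover every vertex inside it, and then invoke lattice translation to cover all of $G_\infty$.
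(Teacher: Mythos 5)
Your proposal is correct, and its skeleton matches the paper's: the same counting lower bound from the fact that a strength-$t$ tower covers $2t^2-2t+1$ vertices, and the same upper-bound witness $\mathbb{T}^{*}_{t,1}=\mathbb{T}(2t^2-2t+1,2t-1)$ analyzed through Lemma \ref{lemma_vectorspace}. Where you genuinely diverge is in verifying that $\mathbb{T}^{*}_{t,1}$ covers everything: the paper argues pictorially, decomposing $G_\infty$ into fundamental squares bounded by four towers (Figure \ref{fig_r=1proof}) and asserting that every vertex lies inside or on exactly one broadcast outline, whereas you prove that assertion via a perfect-tiling (perfect Lee code) argument --- minimal $L^1$-norm of a nonzero lattice vector equals $2t-1$, so the $L^1$-balls of radius $t-1$ are pairwise disjoint, and since the ball's cardinality $2t^2-2t+1$ equals the lattice index, disjointness forces the ball to be a complete set of coset representatives, hence a tiling. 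Your route is more rigorous than the paper's (it supplies exactly the fact the paper reads off the figure) at the cost of the short-vector verification; note, though, that the step you flag as the ``real work'' needs no case analysis on signs. Since the generators are orthogonal of Euclidean length $\sqrt{2t^2-2t+1}$, any lattice vector $a\langle t,1-t\rangle+b\langle t-1,t\rangle$ with $a^2+b^2\geq 2$ has Euclidean norm at least $\sqrt{2(2t^2-2t+1)}=\sqrt{4t^2-4t+2}>2t-1$, and $\lVert v\rVert_1\geq\lVert v\rVert_2$, so only $\pm$ the generators themselves can achieve $L^1$-norm $2t-1$, which they do. With that two-line estimate inserted, your argument is complete and arguably tighter than the paper's own.
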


\begin{proof}
As a consequence of Lemma \ref{lemma_vectorspace}, $G_\infty$ under $\mathbb{T}^{*}_{t,1}$ can be divided into square regions bounded at their corners by towers as illustrated in blue in Figure \ref{fig_r=1proof}. Every vertex $v$ in $G_\infty$ is located inside or on the broadcast outline of exactly one tower and thus $sig(v)\geq1$. Thus every vertex in $G_\infty$ under $\mathbb{T}^{*}_{t,1}$ is covered to depth at least 1.
\end{proof}

Before proving Theorem \ref{thm_r=2}, the analogous result for $(t,2)$ broadcasts with $t>2$, we note a degenerate case.

\begin{proposition}
\label{prop_2,2_optimal}
$\delta_{2,2}(G_{\infty})=\frac{1}{3}$.
\end{proposition}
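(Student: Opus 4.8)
The plan is to establish $\delta_{2,2}(G_\infty)=\frac{1}{3}$ by proving matching lower and upper bounds. For the lower bound I would use a signal-conservation (double-counting) argument rather than the broadcast-outline area estimates that work for larger $t$; for the upper bound I would exhibit and verify the explicit standard broadcast $\mathbb{T}(3,2)$ depicted in Figure \ref{(2,2)-optimal}, whose density equals $\frac{1}{3}$.

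\emph{Lower bound.} First I would record exactly how much signal a single strength-$2$ tower distributes. Such a tower $T$ satisfies $sig(T,T)=\min(\max(2,0),2)=2$, contributes $\min(\max(2-1,0),2)=1$ to each of its four neighbors, and contributes $0$ to every vertex at distance at least $2$, so the total signal it emits is $2+4\cdot 1=6$. Summing the total signal received over the vertex set $V$ of the window $G_{2n+1,2n+1}$ and interchanging the order of summation, I would bound
\[
2|V| \;\leq\; \sum_{v\in V} sig(v) \;=\; \sum_{T}\sum_{v\in V} sig(v,T) \;\leq\; 6\,|\mathbb{T}\cap V| + O(n),
\]
where the left inequality is the requirement that every vertex receive signal at least $r=2$, the central equality is a reordering, and the $O(n)$ term accounts for the towers just outside $V$ that still irradiate $V$ (there are $O(n)$ of them, each emitting at most $6$). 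Dividing by $|V|=(2n+1)^2$ and letting $n\to\infty$ kills the boundary term and yields $\delta_{2,2}(G_\infty)\geq\frac{1}{3}$.

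\emph{Upper bound.} I would verify that $\mathbb{T}(3,2)$ is a valid $(2,2)$ broadcast. A vertex $(c,y)$ lies in $p(3,2)$ precisely when $c\equiv 2y\pmod 3$, so in row $y$ the towers occupy the columns congruent to $2y$, and consecutive rows are offset by $2$. Every tower receives signal $2$ from itself, so it suffices to treat non-tower vertices, for which there are two residue cases. If $c\equiv 2y+1\pmod 3$, then $(c-1,y)$ is a tower (its row demands $c-1\equiv 2y$) and $(c,y-1)$ is a tower (its row demands $c\equiv 2(y-1)\equiv 2y+1$); if $c\equiv 2y+2\pmod 3$, then symmetrically $(c+1,y)$ and $(c,y+1)$ are towers. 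In either case the vertex has exactly two neighboring towers, hence $sig(c,y)=2$. Thus $\mathbb{T}(3,2)$ is a $(2,2)$ broadcast, and since the density of $\mathbb{T}(d,e)$ is $\frac{1}{d}$, its density is $\frac{1}{3}$, matching the lower bound.

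\emph{Main obstacle.} The real subtlety is that this is a genuinely degenerate case: extrapolating the formula of Theorem \ref{thm_r=2} to $t=2$ would give $\frac{1}{2(t-1)^2}=\frac{1}{2}$, which is strictly larger than the true value $\frac{1}{3}$. The analogue of the optimal construction for $t>2$ (for instance the checkerboard) attains only density $\frac{1}{2}$ here, so that construction is not optimal at $t=2$ and the lower-bound reasoning supporting Theorem \ref{thm_r=2} cannot remain valid. The cause is the extreme locality of a strength-$2$ signal—its broadcast outline reaches only the four immediate neighbors—so driving every non-tower vertex to depth $2$ forces outlines to overlap in a way an area-packing bound cannot detect. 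This is exactly why the lower bound must be argued by conserving total emitted signal; the only technical care needed is bounding the $O(n)$ boundary contribution, which vanishes in the density limit.
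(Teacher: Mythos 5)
Your proposal is correct and takes essentially the same approach as the paper: the paper likewise exhibits $\mathbb{T}(3,2)$ as a $(2,2)$ broadcast of density $\frac{1}{3}$ and deduces optimality from the fact that $ex(v)=0$ at every vertex, which is precisely your signal-conservation bound (a strength-$2$ tower emits total signal $6$ while each vertex demands $2$) made explicit. Your residue verification of the upper bound and the windowed double-counting with the $O(n)$ boundary term are simply rigorous elaborations of what the paper states tersely, matching the emitted-signal argument the paper itself uses for the $(2,3)$ case in Proposition \ref{prop:11_counterexample}.
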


\begin{proof} As illustrated in Figure \ref{(2,2)-optimal}, the standard broadcast $\mathbb{T}(3,2)$ is a (2,2)-broadcast. Moreover, $ex(v) = 0$ at every vertex $v$ and thus the broadcast density cannot be decreased. As $\mathbb{T}(3,2)$ has density $\frac{1}{3}$, $\delta_{2,2}(G_{\infty})=\frac{1}{3}$.
\end{proof}

\begin{figure}[h!]
    \begin{center}
        \begin{tikzpicture}[scale=0.6]
            \clip (-0.9, -0.9) rectangle (10.9, 3.9);
            \grid[lightgray]{10}{3}
            \tower{-1}{4}{1}{}
            \tower{0}{3}{1}{}
            \tower{2}{4}{1}{}
            \tower{-1}{1}{1}{}
            \tower{1}{2}{1}{}
            \tower{3}{3}{1}{}
            \tower{5}{4}{1}{}
            \tower{0}{0}{1}{}
            \tower{2}{1}{1}{}
            \tower{4}{2}{1}{}
            \tower{6}{3}{1}{}
            \tower{8}{4}{1}{}
            \tower{1}{-1}{1}{}
            \tower{3}{0}{1}{}
            \tower{5}{1}{1}{}
            \tower{7}{2}{1}{}
            \tower{9}{3}{1}{}
            \tower{11}{4}{1}{}
            \tower{4}{-1}{1}{}
            \tower{6}{0}{1}{}
            \tower{8}{1}{1}{}
            \tower{10}{2}{1}{}
            \tower{7}{-1}{1}{}
            \tower{9}{0}{1}{}
            \tower{11}{1}{1}{}
            \tower{10}{-1}{1}{}
        \end{tikzpicture}
        \caption{ The standard broadcast $\mathbb{T}(3, 2)$ with towers of strength $t=2$.}
        \label{(2,2)-optimal}
    \end{center}
\end{figure}
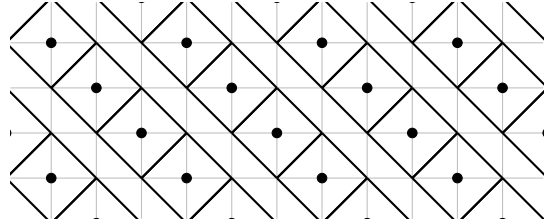

Having established the degenerate case, we proceed to the proof of Theorem \ref{thm_r=2}. The proof employs the method of contradiction and follows immediately from several technical results. In Lemma \ref{lemma_t=2upperbound}, we will establish the existence of a $(t, 2)$ broadcast with density $\frac{1}{2(t-1)^2}$ for any $t>2$. Then in Lemmas \ref{lemma_holes}-\ref{lemma_moreoverlapthanholes} we will  show the impossibility of a $(t, 2)$ broadcast with $t>2$ and broadcast density less than $\frac{1}{2(t-1)^2}$, thus establishing that, for any $t>2$, $\delta_{t,2}(G_\infty)=\frac{1}{2(t-1)^2}$, as desired.

\begin{figure}[h!]
    \centering
    
    \begin{subfigure}{0.5\textwidth}
    \centering
    \begin{tikzpicture}[scale=0.6]
        \clip (-1, -1) rectangle (11, 6);
        \grid[lightgray]{10}{5}
        \tower{0}{4}{2}{}
        \tower{2}{6}{2}{}
        \tower{0}{0}{2}{}
        \tower{2}{2}{2}{}
        \tower{4}{4}{2}{}
        \tower{6}{6}{2}{}
        \tower{4}{0}{2}{}
        \tower{6}{2}{2}{}
        \tower{8}{4}{2}{}
        \tower{10}{6}{2}{}
        \tower{8}{0}{2}{}
        \tower{10}{2}{2}{}
        \tower{12}{0}{2}{}
    \end{tikzpicture}
    \caption { Regular tiling.}
    \end{subfigure}%
    \begin{subfigure}{0.5\textwidth}
    \centering
    \begin{tikzpicture}[scale=0.6]
        \clip (-1, -1) rectangle (11, 6);
        \grid[lightgray]{10}{5}
        \tower{-1}{3}{2}{}
        \tower{1}{5}{2}{}
        \tower{0}{0}{2}{}
        \tower{2}{2}{2}{}
        \tower{4}{4}{2}{}
        \tower{6}{6}{2}{}
        \tower{3}{-1}{2}{}
        \tower{5}{1}{2}{}
        \tower{7}{3}{2}{}
        \tower{9}{5}{2}{}
        \tower{8}{0}{2}{}
        \tower{10}{2}{2}{}
        \tower{11}{-1}{2}{}
    \end{tikzpicture}
    \caption{ Offset tiling. }
    \end{subfigure}
    
    \caption{ Two regular tilings of $G_\infty$ using broadcast outlines with $t=3$.}
    \label{regular_tiling}
\end{figure}
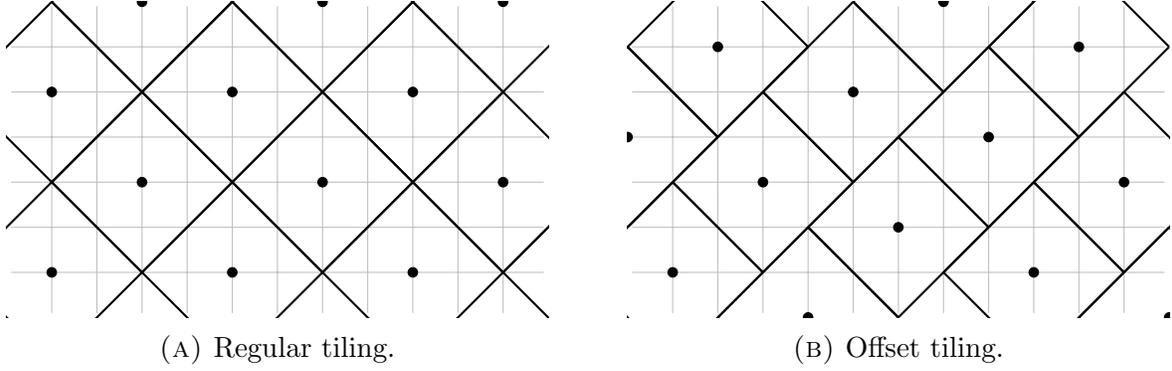

\begin{lemma}
\label{lemma_t=2upperbound}
For any $t>2$, there exists a $(t,2)$ broadcast with density $\frac{1}{2(t-1)^2}$.
\end{lemma}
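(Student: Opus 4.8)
The plan is to exhibit the explicit broadcast shown in Figure \ref{regular_tiling}(a) and verify directly that it covers $G_\infty$ to depth $2$ with the claimed density. Concretely, I would take the tower set $\mathbb{T}$ to be the integer lattice $L$ generated by the vectors $(t-1,t-1)$ and $(t-1,-(t-1))$, a sublattice of $\mathbb{Z}\times\mathbb{Z}$. The guiding idea is that each tower's broadcast outline is the closed diamond of vertices at distance at most $t-1$ from the tower, which encloses area $2(t-1)^2$, and that these diamonds tile $G_\infty$ edge-to-edge under $L$. This is plausible because the fundamental parallelogram of $L$ has area equal to the absolute value of the determinant of the matrix with rows $(t-1,t-1)$ and $(t-1,-(t-1))$, namely $2(t-1)^2$, exactly the area enclosed by one outline; and the four edge-neighbors of the diamond centered at a tower $P$ are precisely the lattice translates $P\pm(t-1,t-1)$ and $P\pm(t-1,-(t-1))$.

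Granting the tiling, I would verify the depth-$2$ condition by cases according to where a vertex $v$ sits relative to the diamonds. If $v$ lies strictly inside a diamond, then $dist(v,P)\leq t-2$ for its tower $P$, so $sig(v,P)=\min(t-dist(v,P),2)=2$ and $v$ is already covered by a single tower. If $v$ lies on an open edge shared by the diamonds centered at $P$ and $Q$, then $dist(v,P)=dist(v,Q)=t-1$, giving $sig(v,P)=sig(v,Q)=1$ and total signal $2$. Finally, if $v$ is a diamond corner, say $P+(t-1,0)$, a short computation shows it lies on the outlines of the four towers $P$, $P+(t-1,t-1)$, $P+(t-1,-(t-1))$, and $P+(2(t-1),0)$, each contributing signal $1$, for a total of $4$. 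In every case $sig(v)\geq 2$, so $\mathbb{T}$ is a $(t,2)$ broadcast.

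It then remains to compute the density. Since $L\subseteq\mathbb{Z}\times\mathbb{Z}$ has index $2(t-1)^2$, exactly one out of every $2(t-1)^2$ vertices of $G_\infty$ is a tower; taking the limit over the growing central vertex sets of $G_{2n+1,2n+1}$ yields broadcast density $\frac{1}{2(t-1)^2}$, as required.

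I expect the main obstacle to be the tiling claim of the first paragraph: one must confirm that the radius-$(t-1)$ diamonds genuinely tile the plane under $L$ (equivalently, that their interiors are pairwise disjoint and their union is all of $\mathbb{R}^2$) and that the edge- and corner-incidences are exactly as stated, so that no vertex is omitted and the signal bookkeeping in the case analysis is complete. Once the geometry is pinned down—identifying precisely which towers meet at each shared edge and corner—the case analysis and the index computation are routine.
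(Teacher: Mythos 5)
Your proposal is correct and follows essentially the same route as the paper: both exhibit the regular diamond tiling of Figure \ref{regular_tiling} (your lattice generated by $(t-1,t-1)$ and $(t-1,-(t-1))$ is exactly the paper's regular tiling of broadcast outlines), verify $sig(v)\geq 2$ by cases on whether $v$ is interior to, on an edge of, or at a corner of a diamond, and obtain the density from the area $2(t-1)^2$ of one outline. Your version is slightly more explicit than the paper's---you pin down the lattice generators, check the corner case separately, and compute the density via the lattice index rather than the paper's vertex-to-grid-square bijection---but these are presentational differences, not a different argument.
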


\begin{proof}
Let $\mathbb{T}^{*}_{t,2}$ be a set of towers corresponding to a regular tiling of the infinite grid using broadcast outlines, as illustrated in Figure \ref{regular_tiling}. In $\mathbb{T}^{*}_{t,2}$, every vertex $v\in G_\infty$ is either inside a broadcast outline or on the perimeter of multiple broadcast outlines. If $v$ is inside a broadcast outline, there exists a tower $T$ such that $sig(v,T)\geq2$. If $v$ is on the perimeter of multiple broadcast outlines, there exist towers $T_0$ and $T_1$ such that $sig(v,T_0)+sig(v,T_1)=2$. Thus $sig(v)\geq2$ for all $v\in \mathbb{T}^{*}_{t,2}$ and $\mathbb{T}^{*}_{t,2}$ is a $(t,2)$ broadcast on the infinite grid for all $t>2$.

As the broadcast outline of a tower $T$ with strength $t$ has side length $\sqrt{2}(t-1)$, the area within $T$'s broadcast outline is $2(t-1)^2$ grid squares. Because the broadcast outlines of $\mathbb{T}^{*}_{t,2}$ tile the infinite grid, there exists exactly one tower in $\mathbb{T}^{*}_{t,2}$ for every $2(t-1)^2$ grid squares. We can create a 1-to-1 mapping between vertices in $G_\infty$ and grid squares by associating each vertex $v$ in $G_\infty$ with the grid square above and to the right of $v$. As there exists one tower in $\mathbb{T}^{*}_{t,2}$ for every $2(t-1)^2$ grid squares, one in every $2(t-1)^2$ vertices in $\mathbb{T}^{*}_{t,2}$ is a tower and $\mathbb{T}^{*}_{t,2}$ has broadcast density $\frac{1}{2(t-1)^2}$.
\end{proof}

Ultimately, we show that $\mathbb{T}^{*}_{t,2}$ is an optimal $(t,2)$ broadcast. In order to do this, we show that any $(t,2)$ broadcast with density less than that of $\mathbb{T}^{*}_{t,2}$  would contain holes and that every $(t,2)$ broadcast with holes has broadcast density greater than that of $\mathbb{T}^{*}_{t,2}$, thus establishing a contradiction.

\begin{lemma}
For $t>2$, any $(t,2)$ broadcast with density less than $\frac{1}{2(t-1)^2}$ contains a hole of depth $2$.
\label{lemma_holes}
\end{lemma}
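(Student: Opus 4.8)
The plan is to prove the contrapositive: if a $(t,2)$ broadcast $\mathbb{T}$ contains no hole of depth $2$, then its density is at least $\frac{1}{2(t-1)^2}$. Since $r=2$, a hole of depth $2$ is a contiguous region of half-squares covered to depth $r-2=0$, so the absence of any such hole is equivalent to the statement that \emph{every} half-square of $G_\infty$ is covered to depth at least $1$ (a single depth-$0$ half-square already constitutes a contiguous region, hence a hole of depth $2$).

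First I would recast this covering condition in terms of broadcast outlines. A tower $T$ covers $h_{u,v}$ to depth $\min(sig(u,T),sig(v,T))$, so $T$ covers $h_{u,v}$ to depth at least $1$ exactly when both $u$ and $v$ lie within distance $t-1$ of $T$, i.e. when $h_{u,v}$ lies inside $T$'s broadcast outline. Consequently a half-square has depth $0$ precisely when it lies inside no tower's broadcast outline, and the no-hole hypothesis is equivalent to saying that the broadcast outlines of $\mathbb{T}$ cover every half-square of $G_\infty$.

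The quantitative heart of the argument is to count the half-squares inside one outline. The broadcast outline of a strength-$t$ tower encloses area $2(t-1)^2$ and lies on the half-square grid, so it is tiled without remainder by half-squares of area $\frac{1}{2}$; hence each outline contains exactly $4(t-1)^2$ half-squares, which are precisely the half-squares that tower covers to depth at least $1$. I would then assign to each half-square a tower whose outline contains it (choosing one if several do); under the no-hole hypothesis this assignment is defined on every half-square, and each tower receives at most $4(t-1)^2$ half-squares. Comparing cardinalities inside the window $G_{2n+1,2n+1}$ and letting $n\to\infty$, the half-square density (which is $2$ per vertex) is at most $4(t-1)^2$ times the tower density $\rho$, giving $\rho \ge \frac{1}{2(t-1)^2}$.

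I expect the main obstacle to be making this last counting step rigorous in the limit, since a half-square inside the window can be covered by the outline of a tower lying just outside it. This is handled by counting towers in the window enlarged by the fixed radius $t-1$: this enlargement changes the tower count by only $O(n)$, which is negligible against the $\Theta(n^2)$ interior count, so the density limit is unaffected and the bound $\rho \ge \frac{1}{2(t-1)^2}$ survives.
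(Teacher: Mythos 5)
Your proposal is correct and takes essentially the same approach as the paper: both arguments rest on the count that a strength-$t$ tower covers exactly $4(t-1)^2$ half-squares (two half-squares per vertex of $G_\infty$), so a broadcast of density below $\frac{1}{2(t-1)^2}$ cannot cover every half-square and must leave one at depth $0$, i.e.\ a hole of depth $2$. The only difference is presentational: you make rigorous, via the contrapositive and the enlarged-window limiting argument, the boundary-effect step that the paper compresses into the informal phrase ``must span the same area with fewer towers.''
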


\begin{proof}
Recall that holes of depth $d$ are contiguous regions of half-squares covered to depth $r-d$. In a $(t, 2)$ broadcast, a hole of depth $2$ is thus a region of contiguous, uncovered half-squares bounded by several broadcast outlines. 

In $\mathbb{T}^{*}_{t,2}$, every tower $T$ uniquely covers exactly $2(t-1)^2$ grid squares or, equivalently, $4(t-1)^2$ half-squares. Any $(t,2)$ broadcast with broadcast density less than $\mathbb{T}^{*}_{t,2}$ must span the same area with fewer towers, and thus must leave some half-square uncovered, creating a hole of depth~2.
\end{proof}

For the remainder of this section, we refer to holes of depth 2 simply as holes. Furthermore, we refer to a half-square that is part of a hole as a \emph{hole half-square}, a half-square that is not part of a hole as a \emph{covered half-square}, and a half-square that is covered by at least two towers as an \emph{overlap half-square}.

\begin{lemma}
For $t>2$, any hole of depth $2$ in a $(t,2)$ broadcast is convex.
\label{lemma_holesareconvex}
\end{lemma}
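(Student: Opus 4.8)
The plan is to show that a hole of depth $2$ is convex by arguing that its boundary, which is composed of segments of broadcast outlines, can never turn ``inward'' to create a reflex (concave) corner. Since a hole is a contiguous region of uncovered half-squares bounded by broadcast outlines, and every broadcast outline is a diamond (a square rotated $45^\circ$) with edges of slope $\pm 1$, the boundary of any hole is a closed polygonal curve whose edges all have slope $\pm 1$ on the half-square grid. I would first establish that convexity of such a lattice region is equivalent to the absence of reflex vertices, i.e. interior angles exceeding $180^\circ$ measured from inside the hole.

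\textbf{Key steps.} First I would characterize the local geometry at a boundary vertex $v$ of the hole. Because each bounding broadcast outline has straight edges of slope $+1$ or $-1$ and bends only at its own corners (the four extreme points where $sig(\cdot,T)=1$), a corner of the hole arises where two distinct outlines meet. Second, I would suppose for contradiction that the hole has a reflex corner at some half-square-grid vertex $v$, meaning the hole ``wraps around'' $v$ so that the covered region occupies less than a quarter-turn there while the hole occupies more than three quarter-turns. The crucial observation is that a reflex corner of the hole corresponds to a convex corner pointing into the hole formed by the surrounding towers' outlines. At such a point, the two outline edges meeting at $v$ belong to towers whose signal at $v$ and at the adjacent hole half-squares must be examined: I would compute the total signal (in the working sense of Definition~1) received at the half-squares immediately surrounding $v$ and show that a reflex configuration forces at least one of the putative hole half-squares adjacent to $v$ to actually be covered to depth $r=2$, contradicting the assumption that these half-squares are uncovered and part of the hole.

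\textbf{Main obstacle.} The hard part will be making the local signal computation at a candidate reflex corner fully rigorous across all the ways two broadcast outlines can abut, since the depth of a half-square is the sum of contributions from every nearby tower and a reflex corner could in principle be ``propped open'' by a tower sitting in the concavity. I expect the key lemma to be that any tower close enough to contribute signal to the half-squares wedged into a reflex corner would itself cover those half-squares to depth $2$, eliminating the hole there; this reduces to a finite case analysis on the relative positions of towers whose broadcast outlines bound the hole, using that each outline encloses exactly $2(t-1)^2$ grid squares and that the signal decays linearly with distance. Once the local contradiction is established at every potential reflex vertex, I would conclude that the hole boundary turns only in one rotational sense (only convex corners), so the enclosed region is convex, completing the proof.
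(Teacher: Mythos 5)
Your broad strategy---derive a local contradiction at a concave corner of the hole---is the same in spirit as the paper's, which defines a \emph{spur point} as a vertex of $G_\infty$ adjacent to three hole half-squares and one covered half-square and then rules such points out. But your proposed contradiction is aimed at the wrong object, and the ``key lemma'' you expect to need is false as stated. You claim that any tower close enough to contribute signal at the reflex corner would itself cover the wedged hole half-squares. Counterexample: let $v$ be the corner vertex, let $h_{u,v}$ be its unique covered half-square, and place a tower $T$ at distance $t-1$ from $v$ on the ray from $v$ through $u$. Then $sig(v,T)=1$, while the three neighbors of $v$ other than $u$ are at distance $t$ from $T$ and receive signal $0$; since the depth $T$ contributes to $h_{w,v}$ is $\min(sig(w,T),sig(v,T))$, this tower covers only $h_{u,v}$ and none of the three hole half-squares. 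So a tower really can sit ``in the concavity'' exactly as you fear, and no finite case analysis will show that a hole half-square must be covered. What the geometry actually forces is a signal deficit at the vertex itself: any tower with $sig(v,T)\geq 2$ gives all four neighbors of $v$ signal at least $1$ and hence covers all four half-squares at $v$ (impossible, three are holes); any tower at distance $t-1$ in a diagonal direction covers two of the four half-squares at $v$, hence at least one hole half-square (also impossible); so the only tower that can send $v$ any signal at all is the single axis-aligned tower above, giving $sig(v)\leq 1<2$ and contradicting the definition of a $(t,2)$ broadcast directly. Relocating the contradiction from half-square coverage to the signal requirement at $v$ is precisely what lets the paper's proof finish in a few lines with no case analysis.

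Two further slips. First, a hole of depth $2$ consists of half-squares covered to depth $r-2=0$, so the contradiction you should be seeking is a hole half-square of \emph{positive} depth, not one ``covered to depth $r=2$''; and as shown above, even positive depth is not forced in the critical configuration. Second, you allow the reflex corner to sit at an arbitrary vertex of the half-square grid, which includes the centers of grid squares of $G_\infty$; at such a point there is no vertex of $G_\infty$ on which to impose the condition $sig(v)\geq 2$, so your local computation has no anchor there and the sketch cannot get started. The paper sidesteps both difficulties by \emph{defining} convexity as the absence of spur points---a purely vertex-centered condition---which is the only form of convexity that Lemma \ref{lemma_threeholetypes} subsequently uses.
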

\begin{proof}
We define a \emph{spur point} as a vertex of $G_\infty$ adjacent to three hole half-squares and one covered half-square. Figure \ref{spurpoint} illustrates a spur point. We define a \emph{convex hole} as any hole that does not have a spur point on its boundary. 

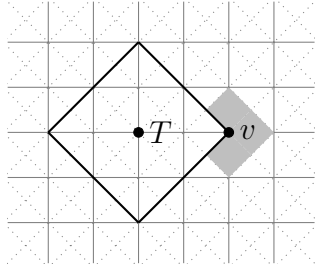
\begin{figure}[h!]
    \begin{center}
        \begin{tikzpicture}[scale=0.6]
            \halfsquaregrid[]{5}{4}
            \holehshoriz[fill=lightgray]{4}{1}
            \holehshoriz[fill=lightgray]{4}{2}
            \holehsvert[fill=lightgray]{4}{2}
            \tower{2}{2}{2}{$T$}
            \vertex[]{4}{2}{$v$}
        \end{tikzpicture}
        \caption{A spur point $v$ adjacent to a non-convex hole with area shaded in grey. }
        \label{spurpoint}
    \end{center}
\end{figure}

Suppose there exists a non-convex hole in a $(t,2)$ broadcast, and let $v$ be a spur point on its boundary.  Because $v$ is adjacent to exactly one covered half-square, it can receive at most a signal of 1 from at most one tower $T$, as illustrated in Figure \ref{spurpoint}. As $sig(v)$ must be greater than or equal to 2 in a $(t,2)$ broadcast, this yields a contradiction. Therefore any hole of depth 2 must be convex.
\end{proof}

It follows from Lemmas \ref{lemma_holes} and \ref{lemma_holesareconvex} that any $(t,2)$ broadcast with density less than that of $\mathbb{T}^{*}_{t,2}$ contains a convex hole. The following result constrains the possible shape of a convex hole that might appear in a $(t,2)$ broadcast of $G_\infty$.

\begin{lemma}
For $t>2$, there exists no convex hole in a $(t,2)$ broadcast of half-square dimension $m\times n$ with $m>2$ and $n\geq2$.
\label{lemma_threeholetypes}
\end{lemma}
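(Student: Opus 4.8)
The plan is to show that a convex hole which is large in both directions must contain a vertex of $G_\infty$ whose total signal is forced to be $0$, contradicting the defining property of a $(t,2)$ broadcast. By Lemmas~\ref{lemma_holes} and~\ref{lemma_holesareconvex}, any candidate hole is a contiguous region of half-squares with no spur points, hence no reflex corner on its boundary; since every edge of the half-square grid runs in one of the two diagonal directions, a region with no reflex corner is a rectangle. So I would assume the hole is an $m\times n$ rectangle of half-squares and, for contradiction, that $m>2$ and $n\geq2$, and try to exhibit a forbidden vertex inside it.

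The heart of the argument is the following claim: if a vertex $v\in G_\infty$ is the common corner of four hole half-squares (equivalently, all four half-squares $h_{v,w}$ over the edges $vw$ incident to $v$ are holes), then $sig(v)=0$. To prove it, fix any tower $T$ and suppose $sig(v,T)>0$, so that $dist(v,T)\leq t-1$. If $v\neq T$, let $w$ be the neighbor of $v$ one step closer to $T$, so $dist(w,T)\leq t-2$ and $sig(w,T)=2$; if $v=T$, any neighbor $w$ has $dist(w,T)=1\leq t-2$ (using $t>2$), so again $sig(w,T)=2$. In either case $\min(sig(v,T),sig(w,T))>0$, so the incident half-square $h_{v,w}$ is covered to positive depth and is not a hole, a contradiction. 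Hence $sig(v,T)=0$ for every tower $T$, so $sig(v)=0<2$, which is impossible in a $(t,2)$ broadcast.

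It then remains to locate such a vertex, and this is exactly where the threshold $m>2$ enters. The corners of the half-square grid alternate in checkerboard fashion between vertices of $G_\infty$ (integer points) and centers of grid squares (half-integer points), since adjacent corners differ by $(\pm 1/2,\pm 1/2)$. A grid corner is surrounded by four hole half-squares precisely when it lies in the interior of the rectangle, i.e.\ is one of its $(m-1)(n-1)$ interior corners. With $m\geq3$ and $n\geq2$ we have $m-1\geq2$ and $n-1\geq1$, so the rectangle has two \emph{horizontally adjacent} interior corners; being adjacent they have opposite checkerboard type, so at least one is a vertex of $G_\infty$. Applying the key claim to that vertex produces the desired contradiction.

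The main obstacle is the parity bookkeeping in this last step rather than the signal computation. The signal claim alone would forbid every rectangle with $m,n\geq2$, but the actual content of the lemma is the sharper threshold $m>2$, which persists precisely because a \emph{single} interior corner may be a grid-square center rather than a vertex of $G_\infty$ — this is exactly what allows a $2\times2$ hole (centered at a grid-square center) to survive. So I would make sure the argument only forces a contradiction once two interior corners, and hence one of each type, are guaranteed to be present. The other detail that needs care is confirming that a convex hole is a genuine full rectangle, with no pinch points or interior covered cells, so that ``dimension $m\times n$'' is well defined; both of these are precluded by the absence of spur points.
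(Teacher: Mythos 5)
Your proof is correct and takes essentially the same route as the paper's: the paper likewise reduces the lemma to the observation that any convex hole exceeding the grid-square-centered $2\times 2$ configuration (its Figure~\ref{fig2x2} analysis) must enclose a vertex $v$ of $G_\infty$ with $sig(v)=0$, which is precisely the content of your parity count of interior corners. Your only departures are matters of rigor rather than strategy---you prove the zero-signal claim via the explicit neighbor-distance computation and make the checkerboard bookkeeping explicit, where the paper argues by figures and assertion.
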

\begin{proof}
As a consequence of Lemma \ref{lemma_holesareconvex}, for $t>2$, holes in a $(t,2)$ broadcast must be rectangular or an infinite strip with a width of one half-square. We say that a rectangular hole has dimension $m \times n$ if $m$ and $n$ are its dimensions in half-squares. Figure \ref{fig2x2} displays the two convex holes of dimension $2 \times 2$. The hole displayed in Figure \ref{fig2x2}A is impossible in a $(t,2)$ broadcast because it encloses a vertex $v$ with $sig(v)=0$; thus the only possible $2 \times 2$ hole is of the type shown in Figure \ref{fig2x2}B.

\begin{figure}[h!]
    \centering
    
    \begin{subfigure}{0.5\textwidth}
    \centering
    \begin{tikzpicture}[scale=0.6]
        \halfsquaregrid[]{2}{2}
        \fill[lightgray, rotate around={45: (0,1)}] (0, 1) rectangle (1.414, -0.414);
        \vertex[]{1}{1}{$v$}
        \vertex[]{0}{1}{}
        \vertex[]{1}{0}{}
        \vertex[]{1}{2}{}
        \vertex[]{2}{1}{}
    \end{tikzpicture}
    \caption{ An impossible $2\times2$ hole. }
    \end{subfigure}%
    \begin{subfigure}{0.5\textwidth}
    \centering
    \begin{tikzpicture}[scale=0.6]
        \halfsquaregrid[]{1}{1}
        \fill[lightgray, rotate around={45: (-0.5,0.5)}] (-0.5, 0.5) rectangle (0.914, -0.914);
        \vertex[]{0}{1}{}
        \vertex[]{1}{0}{}
        \vertex[]{0}{0}{}
        \vertex[]{1}{1}{}
    \end{tikzpicture}
    \caption { A possible $2\times2$ hole.}
    \end{subfigure}%
    
    \caption{ The only $2\times2$ convex holes of depth 2.}
    \label{fig2x2}
\end{figure}
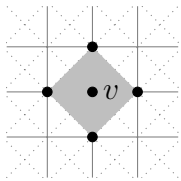
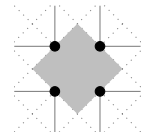

Suppose for contradiction that there exists a convex hole of dimension $n \times m$ with $n \geq  2$ and $m > 2$ in a $(t,2)$ broadcast. Such a hole must include a smaller $2 \times 2$ region, which must appear as depicted in Figure \ref{fig2x2}B. However, the $2 \times 2$ region depicted in Figure \ref{fig2x2}B cannot be extended without enclosing a vertex of $G_\infty$. Thus the $2 \times 2$ hole depicted in Figure \ref{fig2x2}B is the only possible convex hole with both dimensions greater than 1.
\end{proof}

As previously noted, a $(t,2)$ broadcast with density less than that of $\mathbb{T}^{*}_{t,2}$ cannot ensure that every half-square in $G_\infty$ is covered. However, convex holes cause inefficiencies in distributing signal because the vertices surrounding a convex hole require towers to be arranged in a manner that causes significant excess. Lemmas \ref{lemma_moreholesthanoverlap} and \ref{lemma_moreoverlapthanholes} make precise this fundamental inconsistency by proving two contradictory claims about any $(t,2)$ broadcast with density less than that of $\mathbb{T}^{*}_{t,2}$. 

For Lemmas \ref{lemma_moreholesthanoverlap} and \ref{lemma_moreoverlapthanholes}, we define the \emph{hole density} $\delta_{hole}(\mathbb{T})$ of a $(t,2)$ broadcast $\mathbb{T}$ as the proportion of hole half-squares on the half-square grid and the \emph{overlap density} $\delta_{overlap}(\mathbb{T})$ as the proportion of overlap half-squares on the half-square grid.

\begin{lemma}
For $t>2$, if $\mathbb{T}$ is a $(t, 2)$ broadcast with broadcast density less than $\frac{1}{2(t-1)^2}$, then $\delta_{hole}(\mathbb{T})>\delta_{overlap}(\mathbb{T})$.
\label{lemma_moreholesthanoverlap}
\end{lemma}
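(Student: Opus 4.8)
The plan is to prove the strict inequality by a double-counting (incidence) argument that compares the total coverage supplied by the towers of $\mathbb{T}$ with the total number of half-squares in a large region, and then to pass to densities. Fix the exhausting region used in the density definition, the half-squares of $G_{2n+1,2n+1}$, and let $N$ be the number of half-squares it contains, $H$ the number of hole half-squares, $O$ the number of overlap half-squares, and $\rho$ the broadcast density of $\mathbb{T}$. Since every half-square has area $\tfrac12$, the number of half-squares in the region is asymptotically twice the number of vertices, so $\frac{|V|}{N}\to\frac12$ as $n\to\infty$. The quantities I will track are the incidences between towers and the half-squares they cover to depth at least $1$.

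First I would evaluate this incidence count exactly. By the area computation already recorded, the broadcast outline of a tower of strength $t$ encloses $2(t-1)^2$ grid squares, hence exactly $4(t-1)^2$ half-squares, and a half-square lying inside an outline is precisely one that the corresponding tower covers to depth at least $1$. Therefore the number of pairs $(T,h)$ with $T\in\mathbb{T}$ covering $h$ to depth at least $1$ equals $4(t-1)^2|\mathbb{T}|$, up to boundary contributions that are negligible in the limit. Next I would bound the same count from below using $H$ and $O$. Classifying each half-square by the number of towers that cover it to depth at least $1$, a hole half-square is covered by $0$ towers, an overlap half-square by at least $2$, and every other covered half-square by exactly $1$; there are $N-H-O$ half-squares of this last type. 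Summing gives incidence count $\ge (N-H-O)+2O=N-H+O$, so that $4(t-1)^2|\mathbb{T}|\ge N-H+O$.

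Finally I would divide this inequality by $N$ and take $n\to\infty$. On the left, $\frac{|\mathbb{T}|}{N}=\frac{|\mathbb{T}|}{|V|}\cdot\frac{|V|}{N}\to \frac{\rho}{2}$, so $4(t-1)^2\frac{|\mathbb{T}|}{N}\to 2(t-1)^2\rho$, which is strictly less than $1$ by the density hypothesis $\rho<\frac{1}{2(t-1)^2}$. On the right, $\frac{N-H+O}{N}\to 1-\delta_{hole}(\mathbb{T})+\delta_{overlap}(\mathbb{T})$. Combining yields $1>1-\delta_{hole}(\mathbb{T})+\delta_{overlap}(\mathbb{T})$, which rearranges to $\delta_{hole}(\mathbb{T})>\delta_{overlap}(\mathbb{T})$, as claimed. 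I expect the main obstacle to be the asymptotic bookkeeping rather than any combinatorial subtlety: one must check that the incidence equality holds up to lower-order boundary terms and that the three densities exist as genuine limits along the chosen exhaustion. It is worth noting that this direction relies only on counting and the fixed outline area, and, unlike the companion overlap estimate, does not invoke the convexity of holes (Lemma \ref{lemma_holesareconvex}) or the shape classification of Lemma \ref{lemma_threeholetypes}.
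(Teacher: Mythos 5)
Your proposal is correct and is essentially the paper's own argument in contrapositive form: the paper likewise double-counts tower--half-square incidences, noting each tower covers exactly $4(t-1)^2$ half-squares while hole half-squares contribute $0$ and overlap half-squares at least $2$, so $\delta_{hole}(\mathbb{T})\leq\delta_{overlap}(\mathbb{T})$ would force broadcast density at least $\frac{1}{2(t-1)^2}$. Your version simply makes explicit the asymptotic bookkeeping (limits along $G_{2n+1,2n+1}$, the factor $|V|/N\to\tfrac12$, negligible boundary terms) that the paper leaves implicit, and you are right that this direction does not need the convexity or shape lemmas, which the paper's opening paragraph cites but never uses in the count.
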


\begin{proof}
As a consequence of Lemmas \ref{lemma_holes} and \ref{lemma_holesareconvex}, any $(t, 2)$ broadcast with broadcast density less than $\frac{1}{2(t-1)^2}$ has at least one convex hole of depth 2. In addition, every hole in this $(t,2)$ broadcast is convex.

As noted in the proof of Lemma \ref{lemma_holes}, any tower $T$ can cover at most $4(t-1)^2$ half-squares. This is exactly the case for the broadcast $\mathbb{T}^{*}_{t,2}$, in which there are no hole half-squares or overlap half-squares. Because each hole half-square is not covered by any towers, and each overlap half-square is covered by at least two towers, any broadcast $P$ with $\delta_{hole}(P)\leq\delta_{overlap}(P)$ has broadcast density greater than or equal to $\frac{1}{2(t-1)^2}$. This proves the contrapositive, thereby establishing the result.
\end{proof}

\begin{lemma}
For $t>2$, if $\mathbb{T}$ is a $(t, 2)$ broadcast with broadcast density less than $\frac{1}{2(t-1)^2}$, then $\delta_{hole}(\mathbb{T})\leq\delta_{overlap}(\mathbb{T})$.
\label{lemma_moreoverlapthanholes}
\end{lemma}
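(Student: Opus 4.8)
The plan is to prove the purely geometric inequality $\delta_{hole}(\mathbb{T}) \le \delta_{overlap}(\mathbb{T})$ directly, so that together with Lemma \ref{lemma_moreholesthanoverlap} it produces the desired contradiction: no $(t,2)$ broadcast of density below $\frac{1}{2(t-1)^2}$ can exist. Since Lemmas \ref{lemma_holesareconvex} and \ref{lemma_threeholetypes} pin down the shape of every hole --- a width-one strip (finite or infinite) or the single $2\times 2$ hole of Figure \ref{fig2x2}B --- it suffices to exhibit, for each hole, a set of overlap half-squares at least as numerous as the hole itself, assigned so that no overlap half-square is charged by two different holes. Summing these local counts over a large central window and dividing by its area then yields $\delta_{overlap} \ge \delta_{hole}$.

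The engine of the argument is a local observation about boundary vertices. Every edge of a hole's boundary lies on the broadcast outline of some tower $T$, and every vertex $v \in G_\infty$ lying strictly inside such an outline edge satisfies $sig(v,T)=1$, since $v$ sits exactly on $T$'s outline. Because the hole occupies the half-squares on one side of $v$, the tower $T$ is the \emph{only} tower bounding the hole at $v$; yet a $(t,2)$ broadcast requires $sig(v)\ge 2$, so there must be a second tower $T'$ with $sig(v,T')\ge 1$. As $T'$ covers no hole half-square, $T'$ lies on the covered side, and its outline together with that of $T$ encloses covered half-squares incident to $v$ that are covered by both towers --- that is, overlap half-squares. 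First I would make this precise: for each eligible boundary vertex $v$ I would pick a specific half-square incident to $v$ and show it is covered simultaneously by $T$ and by the forced tower $T'$.

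Next I would set up the charging. To each hole I assign the overlap half-squares produced by the boundary vertices lying strictly inside its bounding outline edges, and I would check that the number of such vertices is at least the number of hole half-squares. For a width-one strip of $n$ half-squares this reduces to a perimeter-versus-area estimate --- the two long sides contribute on the order of $2n$ eligible boundary vertices against $n$ hole half-squares --- while the lone $2\times 2$ hole is dispatched by direct inspection of Figure \ref{fig2x2}B. I would then verify injectivity: distinct eligible boundary vertices yield distinct overlap half-squares, and since every hole is convex and separated from every other hole by covered half-squares, an overlap half-square charged to one hole is not charged to another.

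I expect the main obstacle to be the local geometry underlying the charging. The delicate point is that $T'$ is only guaranteed to cover the vertex $v$ itself, so one must choose the incident half-square so that \emph{both} of its defining vertices receive signal from $T'$, confirming it is a genuine overlap rather than merely a half-square covered by $T$ alone; ruling out the degenerate placements of $T'$ that could make two boundary vertices claim the same half-square is part of this. Handling the capped ends of finite strips and the corner vertices --- which receive signal $1$ from each of two distinct outlines and hence force no extra tower, so must be excluded from the count --- together with the bookkeeping that keeps the charge injective when two holes lie close together, is where the real work lies. Once these are settled, $\delta_{overlap}\ge\delta_{hole}$ follows by summation, contradicting Lemma \ref{lemma_moreholesthanoverlap}.
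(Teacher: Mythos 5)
Your overall strategy is the paper's: charge each hole with nearby overlap half-squares forced by the constraint $sig(v)\geq 2$ at hole-boundary vertices, verify the charge is at least the hole's area, keep it injective, and sum over a large window. But the engine you propose --- for each boundary vertex $v$ strictly inside an outline edge, pick a half-square incident to $v$ covered by both the bounding tower $T$ and the forced second tower $T'$ --- is false as a local statement. Since every hole-side neighbor of $v$ must receive signal $0$ from every tower covering $v$ (otherwise a hole half-square would be covered), each tower giving $v$ signal either has its outline edge running along the hole boundary with interior on the covered side, or has a diamond \emph{corner} at $v$ pointing toward one of the two covered-side neighbors. In the configuration where $T$ and $T'$ both have corners at $v$ aimed at the two \emph{different} covered-side neighbors $u_1$ and $u_2$, the half-square $h_{v,u_1}$ is covered only by $T$ and $h_{v,u_2}$ only by $T'$: no half-square incident to $v$ is an overlap half-square. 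This is precisely the paper's ``bad end configuration'' (case 4 of Figure \ref{fig_end1xn}B), which yields only $0.5$ associated half-squares and forces the paper to abandon vertex-local counting: the deficit is recovered only by enumerating all footprint configurations farther along the hole (Figure \ref{fig_end1xn}C) and by a distance-based association rule that splits half-squares equidistant from two holes and discards ties among three or more --- a device your blanket injectivity claim (``holes are separated by covered half-squares'') does not replace, since an overlap half-square genuinely can be equally close to two holes.

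Your arithmetic also does not close even where the local claim holds. A $1\times n$ hole has $n+1$ boundary lattice vertices (alternating sides along the strip), not order $2n$, and only the $n-1$ interior ones are eligible under your scheme --- you explicitly exclude the two end vertices, where two outlines meet and no extra tower is forced. So a perfect one-half-square-per-eligible-vertex charge gives at most $n-1<n$, and for the $1\times 1$ hole it gives $0<1$. The paper's count succeeds exactly through the vertices you discard: the two towers whose outlines form the end edges $e$ and $f$ must have overlapping footprints even though no third tower is required there, contributing at least $0.5$ per end (Figure \ref{fig_end1xn}B), with each long-edge counting region contributing $2$ (Figure \ref{fig_longedge}B) and an extended region needed in the awkward $n=6,7$ cases (Figure \ref{fig_n=567}). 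In short, the skeleton of your plan matches the paper, but the three items you defer as ``the real work'' --- the degenerate corner-corner placement, the end-vertex contributions, and cross-hole injectivity --- are not finishing details: each is a point where the proof as outlined fails and where the paper's exhaustive footprint case analysis with fractional association is doing the actual load-bearing.
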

\begin{proof}

As a consequence of Lemma \ref{lemma_threeholetypes}, any hole in a $(t, 2)$ broadcast with broadcast density less than $\frac{1}{2(t-1)^2}$ has dimension $2\times2$, $1 \times n \ (n \geq 1)$, or $1 \times \infty$. We define a  counting method that uniquely associates overlap half-squares to holes and show that each hole with area $A$ can be associated with a region consisting of at least $A$ overlap half-squares, which is sufficient to establish Lemma \ref{lemma_moreoverlapthanholes}.

We define the distance between two half-squares as the Euclidean distance between their center points, and we define the distance between a half-square $h$ and a hole $H$ as the shortest distance between $h$ and any individual hole half-square in $H$. 

We now \emph{associate} each overlap half-square $h$ in a $(t,2)$ broadcast with a convex hole as follows:
\begin{itemize}
    \item If $h$ is uniquely close to a single hole $H$, associate $h$ with $H$. 
    \item If $h$ is equally close to exactly two holes $H_0$ and $H_1$, associate half of $h$ with $H_0$ and half of $h$ with $H_1$. 
    \item If $h$ is equally close to more than two holes, leave $h$ unassociated.
\end{itemize}
Note that association counts the area of each overlap half-square exactly once. 

In subsequent illustrations we outline hole boundaries in black and fill regions internal to holes with gray. Regions located within the broadcast outline of a tower, referred to as \emph{tower footprints}, are outlined in red. When two tower footprints overlap, one is drawn slightly smaller than the other for clarity. Tower footprints often have the size and shape of the broadcast outline for a tower with signal strength $t=3$ because this is the smallest value allowed for $t$ in the scope of this proof. However, because each tower footprint outlines only the area necessarily covered by a tower's broadcast, the extent of a tower footprint is accurate in general. Half-squares associated with a hole are shaded in orange, and half-squares half-associated with a hole are shaded in yellow. \\

\noindent \textbf{Claim 1:} Every $2 \times 2$ hole is associated with 4 overlap half-squares.\\

\begin{figure}[h!]
    \centering
    
    \begin{subfigure}{1.0\textwidth}
    \centering
    \begin{tikzpicture}[scale=0.6]
        \clip (-0.9,0.1) rectangle (6.9,7.9);
        \halfsquaregrid[]{6}{7}
        \redsquare{2}{3};
        \figeleven{0}
    \end{tikzpicture}
    \caption { The minimum footprint required to outline the hole. }
    \end{subfigure}
    \begin{subfigure}{1.0\textwidth}
    \centering
    \begin{tikzpicture}[scale=0.6]
        \halfsquaregrid[]{21}{7}
        \ohalfsquare{3}{4}
        \ohalfsquare{3.5}{3.5}
        \ohalfsquare{3.5}{4.5}
        \ohalfsquare{4}{4}
        \yhalfsquare{2}{3}
        \redsquare{2}{3}
        \smredsquare{3}{4}
        \figeleven{0}
        
        \ohalfsquare{9}{3}
        \ohalfsquare{10.5}{4.5}
        \redsquare{8}{2}
        \redsquare{10}{4}
        \figeleven{7}
        \ohalfsquare{17}{3}
        \ohalfsquare{17.5}{3.5}
        \ohalfsquare{17.5}{2.5}
        \ohalfsquare{18}{3}
        \yhalfsquare{18.5}{4.5}
        \redsquare{16}{2}
        \smredsquare{17}{3}
        \figeleven{15}
    \end{tikzpicture}
    \caption{ Every configuration of tower footprints that ensures $sig(v)\geq2$. }
    \end{subfigure}
    
    \caption{ Possible arrangements of tower footprints around an open-faced $1 \times 2$ hole. }
    \label{fig_open1x2}
\end{figure}
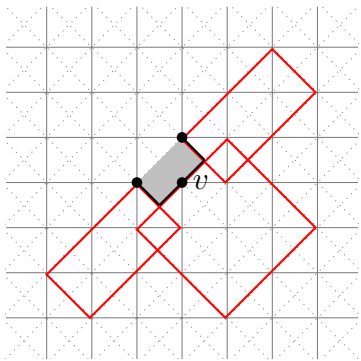
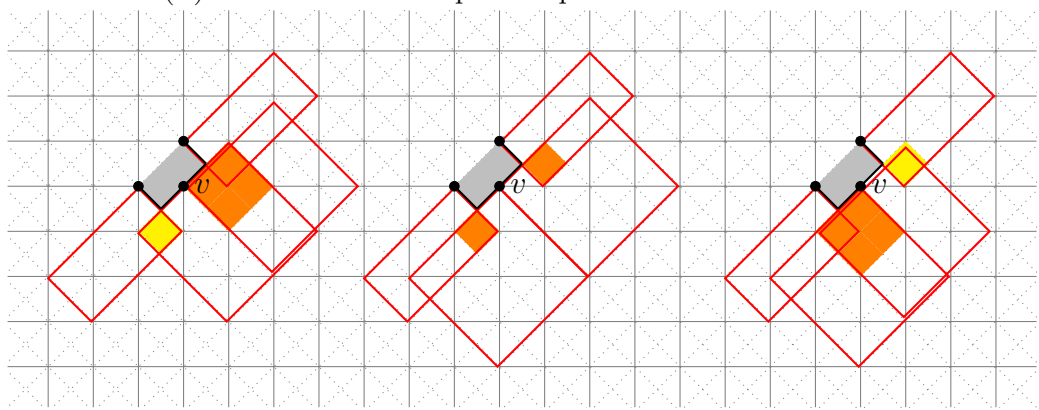

Figure \ref{fig_open1x2} depicts the possible arrangements of tower footprints around a $1 \times 2$ hole with an open face. Because the corners of the hole are located inside a grid square, at least three towers with signal strength $t\geq 2$ are required to form the hole's boundary. The minimum areas covered by the broadcast outlines of these towers are drawn in red in Figure \ref{fig_open1x2}A.

In Figure \ref{fig_open1x2}, vertex $v$ must have reception at least 2 and because it borders a hole can be located no closer to any tower $T$ than its broadcast outline. Thus $v$ must be located on at least two distinct broadcast outlines. Depending on the location of the towers, three different tower footprints are possible: $v$ can be located on the top corner, on the left corner, or on the top-left side of a tower's footprint. Figure \ref{fig_open1x2}B depicts the possible configurations of two tower footprints that form the appropriate hole shape and ensure $sig(v)\geq2$. In each tower configuration, at least 2 overlap half-squares are associated with the open $1\times2$ hole. 

Because we can combine two open $1\times2$ holes to make a $2\times 2$ hole and because we can make the same argument on both sides of the newly created $2\times2$ hole, it follows that at least 4 overlap half-squares are associated with every $2 \times 2$ hole. \\

\noindent\textbf{Claim 2.}  Every finite $1 \times n$ hole is associated with at least $n$ overlap half-squares. \\

The previous argument concerning Figure \ref{fig_open1x2} suffices to prove that the $1 \times 2$ hole is associated with at least 2 overlap half-squares.

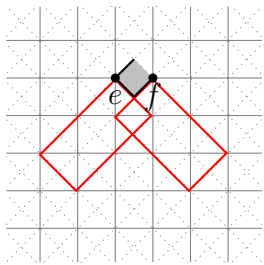
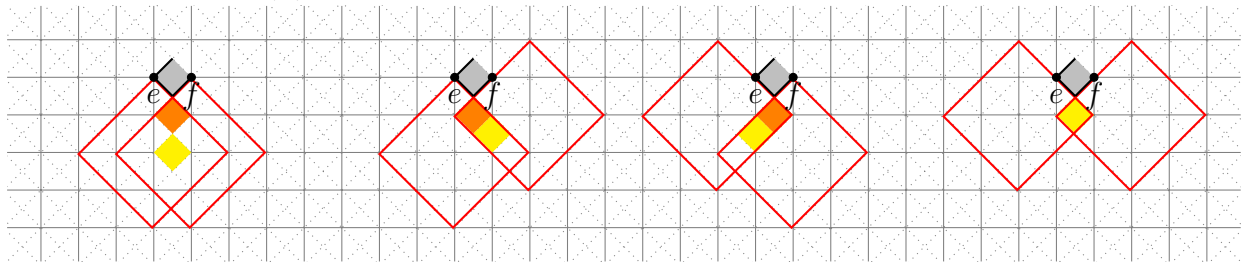
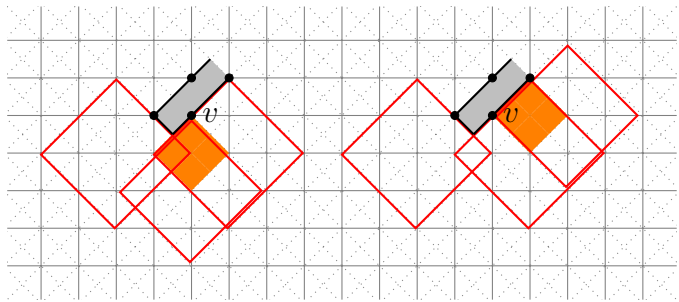
\begin{figure}[h!]
    \centering
    
    \begin{subfigure}{1.0\textwidth}
    \centering
    \begin{tikzpicture}[scale=0.5]
        \halfsquaregrid[]{5}{5}
        \redoutline[thick, red]{0}{2}{4}{2}
        \redoutline[thick, red]{2}{3}{2}{4}
        \figendbox{0}
    \end{tikzpicture}
    \caption { The minimum footprint required to outline the hole. }
    \end{subfigure}
    
    \begin{subfigure}{1.0\textwidth}
    \centering
    \begin{tikzpicture}[scale=0.5]
        \halfsquaregrid[]{31}{5}
        
        \ohalfsquare{3}{3}
        \yhalfsquare{3}{2}
        \redsquare{1}{2}
        \redsquare{2}{2}
        \figendbox{1}
        
        \ohalfsquare{11}{3}
        \yhalfsquare{11.5}{2.5}
        \redsquare{9}{2}
        \redsquare{11}{3}
        \figendbox{9}
        
        \ohalfsquare{19}{3}
        \yhalfsquare{18.5}{2.5}
        \redsquare{16}{3}
        \redsquare{18}{2}
        \figendbox{17}
        
        \yhalfsquare{27}{3}
        \redsquare{24}{3}
        \redsquare{27}{3}
        \figendbox{25}
    \end{tikzpicture}
    \caption{ Every configuration of tower footprints that outlines edges $e$ and $f$ of the hole. }
    \end{subfigure}
    
    \begin{subfigure}{1.0\textwidth}
        \centering
        \begin{tikzpicture}[scale = 0.5]
            \halfsquaregrid[]{16}{6}
            \ohalfsquare{3}{3}
            \ohalfsquare{3.5}{2.5}
            \ohalfsquare{3.5}{3.5}
            \ohalfsquare{4}{3}
            \smredsquare{2}{2}
            \figendboxthree{1}

            \ohalfsquare{12}{4}
            \ohalfsquare{12.5}{3.5}
            \ohalfsquare{12.5}{4.5}
            \ohalfsquare{13}{4}
            \smredsquare{12}{4}
            \figendboxthree{9}
        \end{tikzpicture}
        \caption{ Every possible elaboration on case four of Figure \ref{fig_end1xn}B such that $sig(v)\geq2$ in the $n\geq3$ case. }
        \end{subfigure}
    
    \caption{ Possible arrangements of tower footprints around the ending half-square of a $1 \times n$ hole. }
    \label{fig_end1xn}
\end{figure}

Figure \ref{fig_end1xn} depicts the possible arrangements of tower footprints around an ending half-square of a $1 \times n$ hole. The edges $e$ and $f$ of the $1\times n$ hole must be formed by the broadcast outlines of at least two towers. Figure \ref{fig_end1xn}A illustrates the minimum area covered by the broadcast footprints of these two towers, and Figure \ref{fig_end1xn}B illustrates every possible configuration of their two broadcast footprints.

Note that in the first three cases of Figure \ref{fig_end1xn}B, 1.5 half-squares are associated with the $1\times n$ hole. We refer to these cases as the \emph{good end configurations}. In case 4 of Figure \ref{fig_end1xn}B, which we refer to as the \emph{bad end configuration}, only 0.5 half-squares are associated with the $1\times n$ hole. Because both sides of the $1\times1$ hole necessitate an end configuration, every $1\times1$ hole is associated with at least 1 overlap half-square. 

Figure \ref{fig_end1xn}C displays every possible configuration of tower footprints around a $1\times n, n\geq3$ hole in the bad end configuration with $sig(v)\geq2$. In every case, the hole is associated with 4 overlap half-squares, and thus every $1\times3$ hole with at least one end in the bad end configuration is associated with at least 4 overlap half-squares. Moreover, every $1\times3$ hole with both ends in a good end configuration is associated with at least 3 overlap half-squares. As a consequence, we have that every $1\times3$ hole is associated with at least 3 overlap half-squares.

\begin{figure}[h!]
    \centering
    
    \begin{subfigure}{1\textwidth}
    \centering
    \begin{tikzpicture}[scale=0.6]
        \clip (-0.9,-0.9) rectangle (5.9,5.9);
        \hole[cyan]{0.5}{2.5}{4}{10}
        \redoutline{0}{2}{6}{4}
        \halfsquaregrid[]{5}{5}
        \hole{0}{3}{4}{1}
        \li{0.5}{2.5}{2.5}{4.5}
        \vertex{1}{3}{$v$}
        \vertex{2}{4}{$u$}
        
    \end{tikzpicture}
    \caption { The minimum area covered by tower outlines adjacent to an edge section of the $1\times n$ hole. The blue stripe illustrates the area in which we count associated half-squares. }
    \end{subfigure}
    
    \begin{subfigure}{1.0\textwidth}
        \centering
        \begin{tikzpicture}[scale = 0.6]
            \halfsquaregrid[]{12}{6}
            \redsquare{0}{2}
            \redsquare{1}{3}
            \ohalfsquare{1}{3}
            \ohalfsquare{1.5}{2.5}
            \ohalfsquare{1.5}{3.5}
            \ohalfsquare{2}{3}
            \hole{0}{3}{4}{1}
            \li{0.5}{2.5}{2.5}{4.5}
            \vertex{1}{3}{$u$}
            \vertex{2}{4}{$v$}
        
            \ohalfsquare{8}{4}
            \ohalfsquare{8.5}{3.5}
            \yhalfsquare{8.5}{4.5}
            \node at (9,4.3) {*};
            \yhalfsquare{9}{4}
            \node at (9.5,3.9) {*};
            \smredsquare{7}{3}
            \redsquare{8}{4}
            \redoutline{6}{2}{2}{4}
            \hole{6}{3}{4}{1}
            \li{6.5}{2.5}{8.5}{4.5}
            \vertex{7}{3}{$u$}
            \vertex{8}{4}{$v$}
        \end{tikzpicture}
        \caption{ Every possible configuration of tower footprints along a long edge segment such that $sig(v)\geq2$. }
        \end{subfigure}
    
    \caption{ Possible arrangements of broadcast footprints adjacent to an edge section of the $1\times n$ hole. Half-squares marked with an asterisk are outside the area in which we count associated half-squares. }
    \label{fig_longedge}
\end{figure}
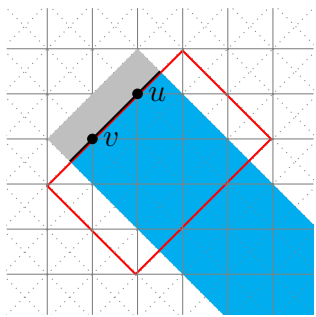
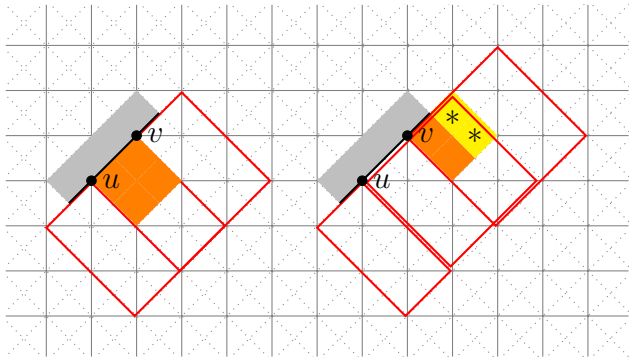

Figure \ref{fig_longedge} depicts the possible arrangements of tower footprints along the long edge of a hole of dimension $1 \times n$ with $n \geq 4$. We refer to any section of hole boundary of the type illustrated in Figure \ref{fig_longedge}A as a \emph{long edge segment}. Figure \ref{fig_longedge}A outlines the minimum area that must be covered by broadcast outlines to form the long edge segment in red.

In previous arguments, we have counted only those associated half-squares positioned at both ends of the $1\times n$ hole. When considering the long edge of the $1\times n$ hole, we count only those associated half-squares within the blue \emph{counting region} depicted in Figure~\ref{fig_longedge}A. Because each distinct long edge segment of a hole has a distinct counting region, this restriction ensures that we never double-count an associated half-square.

Figure \ref{fig_longedge}B illustrates every possible arrangement of broadcast footprints that ensures both that $sig(v) \geq 2$ and that the illustrated long edge segment containing the vertices $u$ and $v$ is defined by broadcast outlines. In every case, at least 2 overlap half-squares within the counting region are associated with the hole. 

One side of the $1\times4$ hole is a long edge segment. As a consequence, the $1 \times 4$ hole with two good end configurations is associated with at least 5 overlap half-squares. Moreover, as previously illustrated by Figure \ref{fig_end1xn}C, the $1 \times 4$ hole with at least one bad end configuration is associated with at least 4 overlap half-squares. Thus every $1\times4$ hole is associated with at least 4 overlap half squares.

\begin{figure}[h!]
    \centering
    
    \begin{subfigure}{0.33\textwidth}
        \centering
        \begin{tikzpicture}[scale=0.6]
            \halfsquaregrid{3}{4}
            \hole{0}{1}{5}{1}
            \li{0}{1}{0.5}{0.5}
            \li{0}{1}{2.5}{3.5}
            \li{0.5}{0.5}{3}{3}
            \li{2.5}{3.5}{3}{3}
            \vertex{0}{1}{}
            \vertex{1}{1}{}
            \vertex{1}{2}{}
            \vertex{2}{2}{}
            \vertex{2}{3}{}
            \vertex{3}{3}{}
            \rbracket{1}{1}{2}
            \lbracket{1}{2}{2}
            \node[blue] at (0,0) {0.5};
            \node[blue] at (3.5,3.5) {0.5};
        \end{tikzpicture}
        \caption {  }
    \end{subfigure}%
    \begin{subfigure}{0.33\textwidth}
        \centering
        \begin{tikzpicture}[scale=0.6]
            \halfsquaregrid{4}{5}
            \hole{0}{1}{6}{1}
            \li{0}{1}{0.5}{0.5}
            \li{0}{1}{3}{4}
            \li{0.5}{0.5}{3.5}{3.5}
            \li{3}{4}{3.5}{3.5}
            \vertex{0}{1}{}
            \vertex{1}{1}{}
            \vertex{1}{2}{}
            \vertex{2}{2}{}
            \vertex{2}{3}{}
            \vertex{3}{3}{}
            \vertex{3}{4}{}
            \rbracket{1}{1}{2}
            \lbracket{1}{2}{3}
            \node[blue] at (0,0) {0.5};
            \node[blue] at (4,4) {0.5};
        \end{tikzpicture}
        \caption{  }
    \end{subfigure}%
    \begin{subfigure}{0.33\textwidth}
        \centering
        \begin{tikzpicture}[scale = 0.6]
            \halfsquaregrid{4}{5}
            \hole{0}{1}{7}{1}
            \li{0}{1}{0.5}{0.5}
            \li{0}{1}{3.5}{4.5}
            \li{0.5}{0.5}{4}{4}
            \li{3.5}{4.5}{4}{4}
            \vertex{0}{1}{}
            \vertex{1}{1}{}
            \vertex{1}{2}{}
            \vertex{2}{2}{}
            \vertex{2}{3}{}
            \vertex{3}{3}{}
            \vertex{3}{4}{}
            \vertex{4}{4}{}
            \rbracket{2}{2}{3}
            \lbracket{1}{2}{3}
            \node[blue] at (0,0) {0.5};
            \node[blue] at (4.5,4.5) {0.5};
        \end{tikzpicture}
        \caption{  }
    \end{subfigure}%
    
    \caption{ Associations for the holes of dimension $1 \times n$ with $n = 5,6,7$. }
    \label{fig_n=567}
\end{figure}
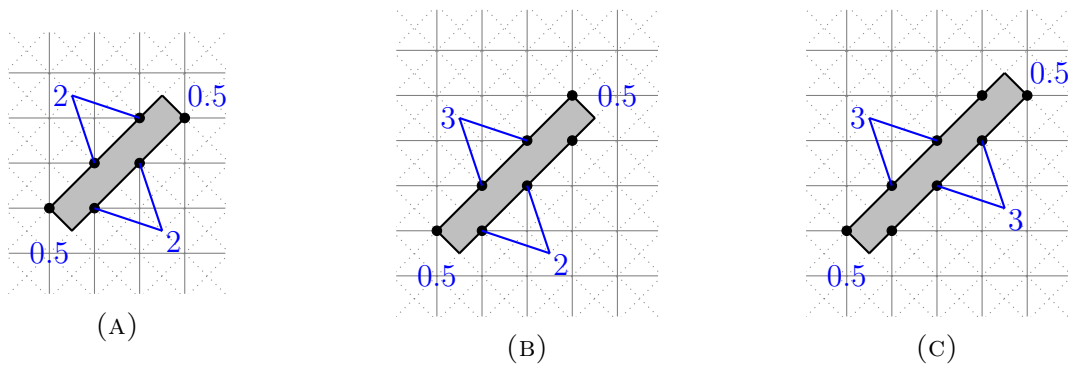

Our arguments can be extended to the $1 \times n$ case as illustrated in Figure \ref{fig_n=567}. First, regardless of the arrangement of broadcast outlines that forms the boundary of the hole, at least 0.5 half-squares at each end of the $1\times n$ hole must be associated with the hole as shown in Figure \ref{fig_end1xn}. Second, at least 2 half-squares in the associated counting region of each distinct long edge segment are associated with the hole. This is enough to demonstrate that every $1\times5$ hole is associated with at least 5 overlap half-squares, as illustrated in Figure~\ref{fig_n=567}A.

However, when $n=6$ and $n=7$, the $1\times n$ hole is not sufficiently long to distinguish two long edge segments on each side and this argument is insufficient. In each of these cases, we identify two long edge segments and extend the width of their counting region by one half-square in each direction along the edge of the hole. This allows us to guarantee that at least 3 half-squares in the counting region are associated with the hole, because we can count the half-squares marked with an asterisk in Figure \ref{fig_longedge}B if need be. Figures \ref{fig_n=567}B and \ref{fig_n=567}C illustrate this procedure for the $n=6$ and $n=7$ cases.

This method of counting associated overlap half-squares can be extended to $1\times n$ holes of arbitrary dimension, as illustrated in Figure \ref{fig_n=large}A. \\

\begin{figure}[h!]
    \centering
    
    \begin{subfigure}{0.5\textwidth}
        \centering
        \begin{tikzpicture}[scale=0.6]
            \halfsquaregrid{9}{9}
            \hole{0}{1}{16}{1}
            \li{0}{1}{0.5}{0.5}
            \li{0}{1}{8}{9}
            \li{0.5}{0.5}{8.5}{8.5}
            \li{8}{9}{8.5}{8.5}
            \vertex{0}{1}{}
            \vertex{1}{1}{}
            \vertex{1}{2}{}
            \vertex{2}{2}{}
            \vertex{2}{3}{}
            \vertex{3}{3}{}
            \vertex{3}{4}{}
            \vertex{4}{4}{}
            \vertex{4}{5}{}
            \vertex{5}{5}{}
            \vertex{5}{6}{}
            \vertex{6}{6}{}
            \vertex{6}{7}{}
            \vertex{7}{7}{}
            \vertex{7}{8}{}
            \vertex{8}{8}{}
            \vertex{8}{9}{}
            \rbracket{1}{1}{2}
            \rbracket{3}{3}{2}
            \rbracket{5}{5}{2}
            \rbracket{7}{7}{2}
            \lbracket{1}{2}{2}
            \lbracket{3}{4}{2}
            \lbracket{6}{7}{3}
            \node[blue] at (0,0) {0.5};
            \node[blue] at (9,9) {0.5};
        \end{tikzpicture}
        \caption {  }
    \end{subfigure}%
    \begin{subfigure}{0.5\textwidth}
        \centering
        \begin{tikzpicture}[scale=0.6]
            \halfsquaregrid{10}{10}
            \hole{1}{1}{16}{1}
            \li{1}{1}{9}{9}
            \li{1.5}{0.5}{9.5}{8.5}
            \vertex{1}{1}{}
            \vertex{2}{1}{}
            \vertex{2}{2}{}
            \vertex{3}{2}{}
            \vertex{3}{3}{}
            \vertex{4}{3}{}
            \vertex{4}{4}{}
            \vertex{5}{4}{}
            \vertex{5}{5}{}
            \vertex{6}{5}{}
            \vertex{6}{6}{}
            \vertex{7}{6}{}
            \vertex{7}{7}{}
            \vertex{8}{7}{}
            \vertex{8}{8}{}
            \vertex{9}{8}{}
            \vertex{9}{9}{}
            \rbracket{2}{1}{2}
            \rbracket{4}{3}{2}
            \rbracket{6}{5}{2}
            \rbracket{8}{7}{2}
            \lbracket{2}{2}{2}
            \lbracket{4}{4}{2}
            \lbracket{6}{6}{2}
            \lbracket{8}{8}{2}
            \filldraw (0.5, 0) circle (2pt);
            \filldraw (0.75, 0.25) circle (2pt);
            \filldraw (1, 0.5) circle (2pt);
            \filldraw (9.5, 9) circle (2pt);
            \filldraw (9.75, 9.25) circle (2pt);
            \filldraw (10, 9.5) circle (2pt);
        \end{tikzpicture}
        \caption{  }
    \end{subfigure}%
    
    \caption{ Associations for the holes of dimensions $1 \times 16$ and $1 \times \infty$. }
    \label{fig_n=large}
\end{figure}

\noindent\textbf{Claim 3. }Any length $n$ segment of the $1\times\infty$ hole is associated with at least $n$ overlap half-squares. \\

This result is analogous to Claim 2, and can be verified by counting distinct long edge segments along the edge of an arbitrary section of the $1\times\infty$ hole. Refer to Figure \ref{fig_n=large}B for a visual example.

Lemma \ref{lemma_moreoverlapthanholes} now follows from Claims 1-3.
\end{proof}

We are now equipped to establish our second main result.

\begin{theorem}
\label{thm_r=2}
If $t>2$, then $\delta_{t,2}(G_{\infty})=\frac{1}{2(t-1)^2}$.
\end{theorem}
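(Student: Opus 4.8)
The plan is to assemble the theorem from the technical lemmas already in place, since all of the genuine difficulty has been front-loaded into those results. The proof splits into a matching upper and lower bound on $\delta_{t,2}(G_\infty)$.

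First I would dispatch the upper bound, which is immediate. Lemma \ref{lemma_t=2upperbound} exhibits an explicit $(t,2)$ broadcast $\mathbb{T}^{*}_{t,2}$ of density exactly $\frac{1}{2(t-1)^2}$, so by the definition of optimal density as the minimum over all $(t,2)$ broadcasts, we have $\delta_{t,2}(G_\infty) \leq \frac{1}{2(t-1)^2}$.

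For the lower bound I would argue by contradiction. Suppose some $(t,2)$ broadcast $\mathbb{T}$ on $G_\infty$ has broadcast density strictly less than $\frac{1}{2(t-1)^2}$. Then Lemma \ref{lemma_moreholesthanoverlap} forces $\delta_{hole}(\mathbb{T}) > \delta_{overlap}(\mathbb{T})$, while Lemma \ref{lemma_moreoverlapthanholes} forces $\delta_{hole}(\mathbb{T}) \leq \delta_{overlap}(\mathbb{T})$. These two conclusions are directly contradictory, so no $(t,2)$ broadcast of density below $\frac{1}{2(t-1)^2}$ can exist; hence $\delta_{t,2}(G_\infty) \geq \frac{1}{2(t-1)^2}$. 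Combining the two bounds yields $\delta_{t,2}(G_\infty) = \frac{1}{2(t-1)^2}$, with $\mathbb{T}^{*}_{t,2}$ realizing the optimum, as desired.

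The main obstacle is not in this final assembly — which reduces to a single contradiction — but in the two lemmas on which it rests. The hole-versus-overlap accounting of Lemma \ref{lemma_moreoverlapthanholes} is where the real work lies: one must show, via the association scheme, that every convex hole of area $A$ is necessarily accompanied by at least $A$ overlap half-squares, case-checking the $2\times2$, finite $1\times n$, and $1\times\infty$ hole shapes that Lemmas \ref{lemma_holesareconvex} and \ref{lemma_threeholetypes} permit. Given those results together with the complementary counting inequality of Lemma \ref{lemma_moreholesthanoverlap}, the theorem follows cleanly from the two-sided bound above.
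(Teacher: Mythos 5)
Your proof is correct and follows essentially the same route as the paper: the lower bound via the contradiction between Lemmas \ref{lemma_moreholesthanoverlap} and \ref{lemma_moreoverlapthanholes}, and the upper bound from the explicit broadcast $\mathbb{T}^{*}_{t,2}$ of Lemma \ref{lemma_t=2upperbound}. If anything, you are slightly more careful than the paper's own proof body, which states only the contradiction argument and leaves the matching upper bound implicit in the surrounding discussion of Lemma \ref{lemma_t=2upperbound}.
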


\begin{proof} Assume for contradiction that there exists a pattern $\mathbb{T}$ of towers corresponding to a $(t,2)$ broadcast on the infinite grid with broadcast density less than $\frac{1}{2(t-1)^2}$. By Lemma \ref{lemma_moreholesthanoverlap}, this broadcast has more hole half-squares than overlap half-squares. However, by Lemma \ref{lemma_moreoverlapthanholes}, this broadcast has at least as many overlap half-squares as hole half-squares. This is a contradiction, and thus $\mathbb{T}$ is impossible.
\end{proof}

\section{$(t,r)$ Broadcast Isomorphisms on the Infinite Grid}

Blessing et al. prove that the optimal $(3,3)$ and $(2,1)$ broadcast densities are equal on large grids, and offer a conjecture equivalent to the following. 

\begin{conjecture}{(Blessing et al., \cite{blessing2015t})}
For all $t,r \in \mathbb{Z}^+$, the optimal $(t,r)$ and $(t+1,r+2)$ broadcast densities are equal on $G_\infty$.
\label{conj_blessing}
\end{conjecture}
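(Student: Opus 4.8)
The statement asserts $\delta_{t,r}(G_\infty)=\delta_{t+1,r+2}(G_\infty)$ for \emph{every} pair $(t,r)$, so the natural strategy to prove it is to construct a density-preserving correspondence between optimal broadcasts in the two regimes. The underlying intuition is that passing from strength $t$ to $t+1$ enlarges each broadcast outline by one half-square in every direction while the demand rises from $r$ to $r+2$, and one hopes these two effects cancel. I would first attempt to transport the explicit optimal patterns of Section 3 --- for example mapping $\mathbb{T}^{*}_{t,1}$ to a $(t+1,3)$ broadcast of equal density --- and then try to preserve optimality in both directions by carrying the half-square counting of Lemmas \ref{lemma_moreholesthanoverlap} and \ref{lemma_moreoverlapthanholes} across the map. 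There is real evidence for the $r=1$ slice: Theorem \ref{thm_conj} already supplies $\delta_{t,3}(G_\infty)\leq\delta_{t-1,1}(G_\infty)$ for $t>2$, one direction of the reindexed $r=1$ case, and for that slice the plan reduces to supplying a matching lower bound, as anticipated by our revised Conjecture \ref{conj_t=3}.

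The decisive observation, however, is that the correspondence cannot hold uniformly, so the actual content of settling the statement lies in certifying its \emph{failure}. The endpoint $t=1$ breaks immediately: a strength-$1$ tower covers only its own vertex, whence any $(1,1)$ broadcast must tower every vertex and $\delta_{1,1}(G_\infty)=1$. In the $(2,3)$ regime a strength-$2$ tower additionally feeds its four neighbors, so one may delete a positive-density independent set of vertices from the all-towers configuration --- for instance a sparse sublattice in which every remaining tower loses at most one neighbor --- while retaining total signal at least $3$ at every vertex, forcing $\delta_{2,3}(G_\infty)<1$. I would present this computation first, as it already refutes the statement at $(t,r)=(1,1)$.

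The principal obstacle is to promote this isolated refutation into the full family of counterexamples for $r>1$. My plan is to exhibit, for suitable parameters, an explicit periodic $(t+1,r+2)$ broadcast whose density falls strictly below the optimal $(t,r)$ density, thereby separating $\delta_{t,r}(G_\infty)$ from $\delta_{t+1,r+2}(G_\infty)$ along an infinite family. The delicate part is certifying the $(t,r)$ side sharply enough: the tiling and hole-versus-overlap dichotomy that settled $r\leq 2$ does not transfer verbatim to $r\geq 3$, since a vertex can now fail its demand by several units rather than one. I would therefore extend the excess-accounting of Lemma \ref{lemma_moreoverlapthanholes} to track holes of several depths simultaneously, aiming for a lower bound on the $(t,r)$ density strong enough to guarantee the strict inequality against the constructed $(t+1,r+2)$ pattern.
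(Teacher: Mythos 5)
You correctly recognize that this statement is resolved in the paper by \emph{refutation}, not proof, and your counterexample at $(t,r)=(1,1)$ is sound: $\delta_{1,1}(G_\infty)=1$ since a strength-$1$ tower covers only itself, while thinning the all-towers configuration along a sparse sublattice (with $t=2$, a deleted vertex still receives $4$ signal from its neighbors and a tower adjacent to one deleted vertex still receives $5$) gives $\delta_{2,3}(G_\infty)<1$. The paper's Proposition \ref{prop:bound} is sharper --- it exhibits an explicit non-standard broadcast of density $\frac{1}{2}$ and matches it with the counting bound that a strength-$2$ tower emits $6$ total signal against a demand of $3$ per vertex, so $\delta_{2,3}(G_\infty)=\frac{1}{2}$ exactly --- but your weaker strict inequality suffices for the refutation, and your observation that Theorem \ref{thm_conj} supplies one direction of the $r=1$ slice matches the paper's stance (Conjecture \ref{conj_t=3}).

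The genuine gap is in your plan for the family of counterexamples with $r>1$. You propose to certify the $(t,r)$ side by extending the hole/overlap accounting of Lemma \ref{lemma_moreoverlapthanholes} to track holes of several depths for $r\geq 3$; but the paper explicitly states that a direct optimality proof for $r=3$ ``is not accessible to the methods previously employed,'' which is precisely why Conjecture \ref{conj_t=3} is left open --- so this step is not a routine extension but an unsolved problem, and your argument would stall there. The paper sidesteps it entirely by exploiting transitivity: the conjectured equality iterates to $\delta_{t,1}=\delta_{t+1,3}=\delta_{t+2,5}=\delta_{t+3,7}$, and since $\delta_{t,1}=\frac{1}{2t^2-2t+1}$ is known \emph{exactly} (Theorem \ref{thm_r=1}), it suffices to exhibit any explicit $(t+2,5)$ or $(t+3,7)$ broadcast of strictly smaller density. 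This only requires an upper bound, furnished by finitely-checkable standard patterns: e.g.\ $\mathbb{T}(8,2)$ is a $(4,5)$ broadcast of density $\frac{1}{8}<\frac{1}{5}=\delta_{2,1}(G_\infty)$, and Table \ref{conjecturefalsetable} lists such patterns for $t=2,\dots,6$; the analogous comparison against Theorem \ref{thm_r=2} yields $\delta_{t,2}\neq\delta_{t+1,4}$. The lesson is to anchor every comparison on the $r\in\{1,2\}$ side, where exact optimal densities are already proven, so that no new lower-bound machinery for $r\geq 3$ is ever needed.
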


In this section, we present a series of counterexamples to Conjecture \ref{conj_blessing}. However, we believe that the conjecture is true in the $t>2$, $r=1$ case. Before proceeding to the proof of Theorem \ref{thm_conj}, we present a trivial counterexample to Conjecture \ref{conj_blessing}.

\begin{proposition}\label{prop:bound}
$\delta_{1,1}(G_\infty) > \delta_{2,3}(G_\infty)$.
\label{prop:11_counterexample}
\end{proposition}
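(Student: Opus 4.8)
The plan is to compute the left-hand quantity exactly and then beat it on the right by exhibiting a single broadcast. For the left side I would invoke Theorem \ref{thm_r=1} with $t=1$, which gives $\delta_{1,1}(G_\infty) = \frac{1}{2\cdot 1^2 - 2\cdot 1 + 1} = 1$. Equivalently, and more directly, a tower of strength $1$ covers only itself, since $\max(1 - dist(v,T),0) = 0$ whenever $dist(v,T) \geq 1$; hence every vertex of $G_\infty$ must be a tower in a $(1,1)$ broadcast, forcing density $1$.

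For the right side it suffices to produce one $(2,3)$ broadcast of density strictly less than $1$. I would take $\mathbb{T} = V \setminus S$, where $S = \{(3a, 3b) : a,b \in \mathbb{Z}\}$ is a sparse periodic set of density $\frac{1}{9}$, so that $\mathbb{T}$ has density $\frac{8}{9}$. Recall that a tower of strength $2$ contributes signal $2$ to itself, signal $1$ to each of its four grid-neighbors, and no signal to vertices at distance $\geq 2$. The verification then splits into two cases, both resting on the single structural fact that points of $S$ are pairwise at distance at least $3$, so that no vertex of $G_\infty$ has two neighbors lying in $S$. For a removed vertex $v \in S$, all four neighbors lie in $\mathbb{T}$, giving $sig(v) = 4 \geq 3$. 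For a tower $v \in \mathbb{T}$, it receives $2$ from itself and, since at most one of its four neighbors lies in $S$, at least $3$ more from tower-neighbors, so $sig(v) \geq 5 \geq 3$. Thus $\mathbb{T}$ is a valid $(2,3)$ broadcast and $\delta_{2,3}(G_\infty) \leq \frac{8}{9} < 1 = \delta_{1,1}(G_\infty)$.

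I do not expect a serious obstacle: the one point that genuinely requires care is that deleting vertices must not starve the remaining \emph{towers} of signal, which is exactly why I impose the separation condition on $S$ (guaranteeing every tower retains at least three tower-neighbors). Because we only need a strict inequality rather than the optimal $(2,3)$ density, no optimization over $S$ is necessary; any sufficiently well-separated periodic set of positive density would do, and the $\frac{1}{9}$ choice is picked merely for concreteness and ease of checking the distance bound.
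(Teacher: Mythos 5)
Your proof is correct, and it takes a genuinely different route from the paper on the key half of the argument. Both you and the paper get $\delta_{1,1}(G_\infty)=1$ the same way (Theorem \ref{thm_r=1} with $t=1$, or the direct observation that a strength-$1$ tower covers only itself). For the other half, the paper exhibits the specific broadcast $\mathbb{T}^*_{2,3}=\{T_{i,j} : i+2j \bmod 4 \in \{0,1\}\}$ of density $\tfrac12$ and, moreover, proves it \emph{optimal} via a signal-accounting lower bound (a strength-$2$ tower emits $6$ total signal and each vertex needs $3$, so any $(2,3)$ broadcast has density at least $\tfrac{3}{6}=\tfrac12$), thereby establishing $\delta_{2,3}(G_\infty)=\tfrac12$ exactly. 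You instead delete a $3$-separated periodic set $S$ of density $\tfrac19$ from the full vertex set; your verification is sound: distinct points of $S$ are at distance at least $3$, so no vertex has two neighbors in $S$, whence each $v\in S$ gets $4$ signal from its four tower-neighbors and each tower gets $2+3=5$, and the cap $\min(\cdot,r)$ in the paper's signal definition is inactive since $t=2<r=3$. Your construction buys simplicity — the separation condition makes the case check nearly trivial, and you correctly note that any sufficiently separated positive-density $S$ works since only strictness is needed — at the cost of a much weaker bound ($\tfrac89$ versus $\tfrac12$) and no optimality statement. That extra strength in the paper is not idle: the exact value and the optimal non-standard broadcast are reused later (the paper points out that no standard $(2,3)$ broadcast is optimal, which feeds its open questions), so while your argument fully proves the proposition as stated, it would not support those downstream remarks.
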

\begin{proof}
By Theorem \ref{thm_r=1}, the optimal $(1,1)$ broadcast $\mathbb{T}^*_{1,1}$ is the set of all vertices of the infinite grid. Thus $\delta_{1,1}(G_\infty) = 1.$ 

\begin{figure}[h!]
    \begin{center}
        \begin{tikzpicture}[scale=0.7]
            \clip (-0.9, -0.9) rectangle (10.9, 3.9);
            \grid[lightgray]{10}{3}
            \tower{-1}{-1}{1}{}
            \tower{2}{-1}{1}{}
            \tower{3}{-1}{1}{}
            \tower{6}{-1}{1}{}
            \tower{7}{-1}{1}{}
            \tower{10}{-1}{1}{}
            \tower{11}{-1}{1}{}
            \tower{0}{0}{1}{}
            \tower{1}{0}{1}{}
            \tower{4}{0}{1}{}
            \tower{5}{0}{1}{}
            \tower{8}{0}{1}{}
            \tower{9}{0}{1}{}
            
            \tower{-1}{1}{1}{}
            \tower{2}{1}{1}{}
            \tower{3}{1}{1}{}
            \tower{6}{1}{1}{}
            \tower{7}{1}{1}{}
            \tower{10}{1}{1}{}
            \tower{11}{1}{1}{}
            \tower{0}{2}{1}{}
            \tower{1}{2}{1}{}
            \tower{4}{2}{1}{}
            \tower{5}{2}{1}{}
            \tower{8}{2}{1}{}
            \tower{9}{2}{1}{}

            \tower{-1}{3}{1}{}
            \tower{2}{3}{1}{}
            \tower{3}{3}{1}{}
            \tower{6}{3}{1}{}
            \tower{7}{3}{1}{}
            \tower{10}{3}{1}{}
            \tower{11}{3}{1}{}
            \tower{0}{4}{1}{}
            \tower{1}{4}{1}{}
            \tower{4}{4}{1}{}
            \tower{5}{4}{1}{}
            \tower{8}{4}{1}{}
            \tower{9}{4}{1}{}
        \end{tikzpicture}
        \caption{ An optimal $(2,3)$ broadcast.}
        \label{(2,3)-optimal}
    \end{center}
\end{figure}
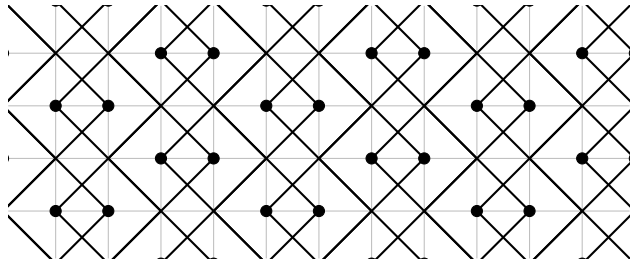

Figure \ref{(2,3)-optimal} displays the broadcast outlines of the set of towers 
\[\mathbb{T}^*_{2,3} = \{T_{i,j} \ : \ i + 2j \ mod \ 4 \in \{0,1\}\}.\]
We show that $\mathbb{T}^*_{2,3}$ is an optimal $(2,3)$ broadcast with density $\frac{1}{2}$, which suffices to establish Proposition \ref{prop:11_counterexample}.

In $\mathbb{T}^*_{2,3}$ with $t=2$, each vertex $v \in \mathbb{T}^*_{2,3}$ receives 2 signal from itself and 1 signal from an adjacent tower vertex, and each vertex $v \in V_\infty$, $v \not\in\mathbb{T}^*_{2,3}$ receives 1 signal each from three adjacent tower vertices. Thus $\mathbb{T}^*_{2,3}$ is a $(2,3)$ broadcast. Because half of the vertices on each horizontal line are contained in $\mathbb{T}^*_{2,3}$, $\mathbb{T}^*_{2,3}$ has density $\frac{1}{2}$. Finally, because a tower with signal strength $t=2$ emits a total of $6$ signal and every vertex $v \in V_\infty$ requires $3$ signal in a $(2,3)$ broadcast, the density of a $(2,3)$ broadcast is at least $\frac{3}{6} = \frac{1}{2}$ and thus $\mathbb{T}^*_{2,3}$ is optimal.
\end{proof}

In light of Proposition~\ref{prop:bound}, one may expect that other counterexamples to the conjecture exist for $t>2$ and $r=1$. However, our computer implementations \cite{DHRprogram} do not provide any counterexamples for  $t>2$ and $r=1$. Instead, they provide ample evidence for the tightness of the upper bound presented below.
\begin{theorem}
If $t>2$, then $\delta_{t,3}(G_\infty) \leq \delta_{t-1,1}(G_\infty)$.
\label{thm_conj}
\end{theorem}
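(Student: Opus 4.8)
The plan is to take the tower \emph{locations} of the optimal $(t-1,1)$ broadcast $\mathbb{T}^{*}_{t-1,1}$ and reuse them with signal strength $t$ in place of $t-1$. Call the resulting configuration $\mathbb{T}$. Since $\mathbb{T}$ has exactly the same set of tower vertices as $\mathbb{T}^{*}_{t-1,1}$, it has density $\delta_{t-1,1}(G_\infty)$ by Theorem \ref{thm_r=1}, so it suffices to prove that $\mathbb{T}$ is a $(t,3)$ broadcast. Because $t>2$ we have $t-2\ge 1$, so the coverage regions below are genuine balls with a boundary distinct from their center.

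The key structural input is that, applying Theorem \ref{thm_r=1} with parameter $t-1$, the coverage regions of $\mathbb{T}^{*}_{t-1,1}$ --- the taxicab balls $\{v : dist(v,T)\le t-2\}$, one per tower $T$ --- partition the vertex set of $G_\infty$. Indeed, each such ball contains exactly $2(t-1)^2-2(t-1)+1$ vertices, the broadcast has density equal to the reciprocal of this number, and every vertex is covered at least once; hence every vertex lies in exactly one ball. I would therefore assign to each vertex $v$ a unique \emph{owner} tower $T_0$, namely the one with $dist(v,T_0)\le t-2$.

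I would then compute $sig(v)$ under $\mathbb{T}$ for an arbitrary vertex $v$, splitting into two cases according to $d := dist(v,T_0)$. If $d\le t-3$, then $T_0$ alone contributes $sig(v,T_0)=\min(t-d,3)=3$ and we are done. The only remaining case is $d=t-2$, i.e. $v$ lies on the boundary of its owner's ball; here $T_0$ contributes exactly $2$, so I must produce at least one more unit of signal from a second tower. For this I would use the fact that every boundary vertex $v$ of a taxicab ball has a grid-neighbor $v'$ strictly outside the ball: stepping one unit away from $T_0$ along a nonzero coordinate of $v-T_0$ raises the distance to $t-1$. Since the balls partition the grid, $v'$ has its own owner $T_1\ne T_0$ with $dist(v',T_1)\le t-2$, whence $dist(v,T_1)\le 1+(t-2)=t-1$ by the triangle inequality and $sig(v,T_1)\ge 1$. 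Thus $sig(v)\ge sig(v,T_0)+sig(v,T_1)\ge 2+1=3$, completing the verification that $\mathbb{T}$ is a $(t,3)$ broadcast.

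The routine parts are the area/density bookkeeping and the observation about outward neighbors of a taxicab ball; the one point requiring care --- and the crux of the argument --- is the boundary case $d=t-2$, where a single tower no longer suffices and one must guarantee that a neighboring tower always lies within distance $t-1$. The neighbor-outside-the-ball argument handles this uniformly, and because the partition is exact there are no other vertices to check. Note that this yields only the inequality $\delta_{t,3}(G_\infty)\le\delta_{t-1,1}(G_\infty)$; a matching lower bound would require separately bounding every $(t,3)$ broadcast from below, which is the content of Conjecture \ref{conj_t=3} and is not attempted here.
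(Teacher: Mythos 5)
Your proof is correct and follows essentially the same route as the paper: reuse the tower set of $\mathbb{T}^{*}_{t-1,1}$ with signal strength $t$, note the density is unchanged, and handle the boundary case $dist(v,T_0)=t-2$ by passing to a grid-neighbor outside the ball whose own covering tower lies within distance $t-1$ of $v$. Your ``owner/partition'' bookkeeping is just a more explicit phrasing of the paper's case split on whether $sig(v,T)\geq 2$ or $sig(v,T_0)=1$ in the original $(t-1,1)$ broadcast.
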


\begin{proof}
The result follows from proving that $\mathbb{T}^*_{t,1}=\mathbb{T}(2t^2-2t+1,2t-1)$ is a $(t+1,3)$ broadcast. To see this, consider a vertex $v \in G_\infty$. If $sig(v,T) \geq 2$ in $\mathbb{T}^*_{t_0,1}$ with $t=t_0$, then $sig(v,T) \geq 3$ in $\mathbb{T}^*_{t_0,1}$ with $t=t_0+1$. Alternatively, if $sig(v,T_0) = 1$ in $\mathbb{T}^*_{t_0,1}$ with $t=t_0$, then $v$ is adjacent to at least one vertex $u$ with $sig(u,T_0) = 0$. As $\mathbb{T}^*_{t,1}$ is a $(t,1)$-broadcast, $sig(u,T_1) \geq 1$ for some tower vertex $T_1$. Thus $sig(v,T_0) = 2$ and $sig(v,T_1) \geq 1$ in $\mathbb{T}^*_{t_0,1}$ with $t=t_0+1$.
\end{proof}

\begin{figure}[h!]
    \centering
    \begin{tikzpicture}[scale=0.7]
        \grid{20}{8}
        \clip (-0.9,-0.9) rectangle (20.9, 8.9);
        \tower{4}{-3}{4}{}
        \tower{11}{-2}{4}{}
        \tower{18}{-1}{4}{}
        \tower{0}{0}{4}{}
        \tower{7}{1}{4}{}
        \tower{14}{2}{4}{}
        \tower{21}{3}{4}{}
        \tower{3}{4}{4}{}
        \tower{10}{5}{4}{}
        \tower{17}{6}{4}{}
        \tower{-1}{7}{4}{}
        \tower{6}{8}{4}{}
        \tower{13}{9}{4}{}
        \tower{20}{10}{4}{}
        \tower{2}{11}{4}{}
    \end{tikzpicture}
    \caption{ $\mathbb{T}^*_{(4,1)}$, or $\mathbb{T}(25,7)$, is a $(5,3)$ broadcast. }
    \label{fig:windmill_t=4}
\end{figure}
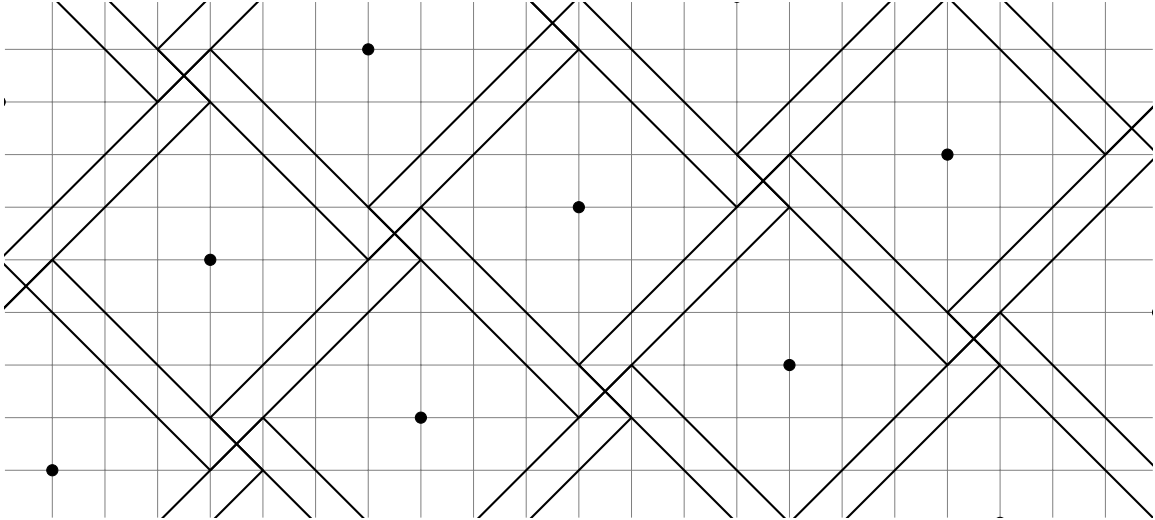
Figure \ref{fig:windmill_t=4} displays the standard broadcast $\mathbb{T}^*_{4,1}$ with $t=5$.
At this point, we have established optimal $(t,r)$ broadcast densities for all $t \in \mathbb{Z}^+$ when $r=1,2,$ and provided an upper bound on the optimal $(t,r)$ broadcast density for all $t \in \mathbb{Z}^+$ when $r=3$. We believe that the upper bound stated in Theorem \ref{thm_conj} is tight. However, a direct proof of this statement is not accessible to the methods previously employed in this paper.

\begin{table}[h!]
 \begin{centering}
 \begin{tabular}
 {||c c c c c||} 
 \hline
 t & \parbox[t]{3cm}{Optimal $(t,1)$\\ Broadcast}  & \parbox[t]{3.75cm}{Best Standard\\$(t+1,3)$ Broadcast}  & \parbox[t]{3.75cm}{Best Standard\\$(t+2,5)$ Broadcast}  & \parbox[t]{3.75cm}{Best Standard \\$(t+3,7)$ Broadcast}\\ 
 \hline\hline
 2 & $\mathbb{T}$(5,3)   & {$\mathbb{T}$(5,3)}   & $\mathbb{T}$(8,2)   & $\mathbb{T}$(11,2)\\ 
 \hline
 3 & $\mathbb{T}$(13, 5) & {$\mathbb{T}$(13, 5)} & $\mathbb{T}$(14,4)  & $\mathbb{T}$(19,7)\\
 \hline
 4 & $\mathbb{T}$(25,7)  & {$\mathbb{T}$(25,7)}  & $\mathbb{T}$(26,10) & $\mathbb{T}$(29,12)\\
 \hline
 5 & $\mathbb{T}$(41,9)  & {$\mathbb{T}$(41,9)}  & $\mathbb{T}$(42,16) & $\mathbb{T}$(43,12)\\
 \hline
 6 & $\mathbb{T}$(61,11) & {$\mathbb{T}$(61,11)} & $\mathbb{T}$(62,26) & $\mathbb{T}$(65,18)\\
  \hline
\end{tabular}
\end{centering}
\caption{ Optimal $(t,1)$ broadcasts and the best standard $(t+1,3)$, $(t+2,5)$, and $(t+3,7)$ broadcasts. }
\label{conjecturefalsetable}
\end{table}

Although Conjecture \ref{conj_blessing} appears to be true in the $t=1$, $r>1$ case, it is false in general. To demonstrate this, we present a table of standard $(t,r)$ broadcasts computed using the code presented in \cite{DHRprogram}.
Table \ref{conjecturefalsetable} compares optimal $(t, 1)$ broadcasts computed using Theorem \ref{thm_r=1} to the standard $(t+1,3)$, $(t+2,5)$, and $(t+3,7)$ broadcasts of lowest density. Because the density of the standard $(t,r)$ broadcast $\mathbb{T}(d,e)$ is $\frac{1}{d}$, the broadcasts in columns 3 and 4 with lower densities than the corresponding optimal $(t, 1)$ broadcasts provide additional counterexamples to Conjecture \ref{conj_blessing}. Many more standard $(t,r)$ broadcasts, including examples that demonstrate $\delta_{t,2}(G_\infty)\neq\delta_{t+1,4}(G_\infty)$, are provided in \cite{DHRprogram}.

Although Conjecture \ref{conj_blessing} is false in many cases, the best standard $(t+1,3)$ broadcasts provide further evidence that it may be true in the $t>1$, $r=1$ case. These results motivate the following revised conjecture.

\newtheorem*{conjecture:t=3}{Conjecture \ref{conj_t=3}}
\begin{conjecture:t=3}
For all $t>2$, $\delta_{t,3}(G_\infty) = \delta_{t-1,1}(G_\infty)$.
\end{conjecture:t=3}

\section{Further Questions}

\noindent In this section we state several open questions concerning $(t,r)$ broadcasts on the infinite~grid.

\begin{enumerate}
\item (Conjecture \ref{conj_t=3}). What is the optimal $(t,{3})$ broadcast density on the infinite grid? 
\item What is the optimal $(t,r)$ broadcast density on the infinite grid for $r>3$?
\item The proof of Proposition \ref{prop:11_counterexample} provides an example of an optimal non-standard $(2,3)$ broadcast. Moreover, no standard $(2,3)$ broadcast is optimal. Are there other pairs of positive integers $t$ and $r$ for which no standard $(t,r)$ broadcast is optimal? Because the standard $(t,r)$ broadcast of lowest density can be easily computed \cite{DHRprogram}, answering this question in the negative would provide many optimal $(t,r)$ broadcasts.
\item Given a standard broadcast $\mathbb{T}(d,e)$, what is the minimum $t$ required to give each grid vertex a signal $r$?
\item Given a standard broadcast $\mathbb{T}(d,e)$ and a signal $t$, what is the minimum signal $r$ over all vertices?
\end{enumerate}

\bibliography{main}

\begin{thebibliography}{10}

\bibitem{blessing2015t}
David Blessing, Katie Johnson, Christie Mauretour, and Erik Insko.
\newblock On {$(t,r)$} broadcast domination numbers of grids.
\newblock {\em Discrete Appl. Math.}, 187:19--40, 2015.

\bibitem{chang1992domination}
Tony~Yu Chang.
\newblock {\em Domination numbers of grid graphs}.
\newblock ProQuest LLC, Ann Arbor, MI, 1992.
\newblock Thesis (Ph.D.)--University of South Florida.

\bibitem{chang1993domination}
Tony~Yu Chang and W.~Edwin Clark.
\newblock The domination numbers of the {$5\times n$} and {$6\times n$} grid
  graphs.
\newblock {\em J. Graph Theory}, 17(1):81--107, 1993.

\bibitem{dunbar2006broadcasts}
Jean~E. Dunbar, David~J. Erwin, Teresa~W. Haynes, Sandra~M. Hedetniemi, and
  Stephen~T. Hedetniemi.
\newblock Broadcasts in graphs.
\newblock {\em Discrete Appl. Math.}, 154(1):59--75, 2006.

\bibitem{fata2013distributed}
Elaheh Fata, Stephen~L Smith, and Shreyas Sundaram.
\newblock Distributed dominating sets on grids.
\newblock In {\em American Control Conference (ACC), 2013}, pages 211--216.
  IEEE, 2013.

\bibitem{gonccalves2011domination}
Daniel Gon\c{c}alves, Alexandre Pinlou, Micha\"el Rao, and St\'ephan
  Thomass\'e.
\newblock The domination number of grids.
\newblock {\em SIAM J. Discrete Math.}, 25(3):1443--1453, 2011.

\bibitem{grez2014new}
Armando Grez and Michael Farina.
\newblock New upper bounds on the distance domination numbers of grids.
\newblock {\em arXiv preprint arXiv:1410.4149}, 2014.

\bibitem{griggs1992r}
Jerrold~R. Griggs and Joan~P. Hutchinson.
\newblock On the {$r$}-domination number of a graph.
\newblock {\em Discrete Math.}, 101(1-3):65--72, 1992.
\newblock Special volume to mark the centennial of Julius Petersen's ``Die
  Theorie der regul\"aren Graphs'', Part II.

\bibitem{jacobson1984domination}
M.~S. Jacobson and L.~F. Kinch.
\newblock On the domination number of products of graphs. {I}.
\newblock {\em Ars Combin.}, 18:33--44, 1984.

\bibitem{DHRprogram}
Timothy~W. Randolph.
\newblock standard-tr-broadcast.
\newblock {\em GitHub Code download:
  \url{https://github.com/twrand/standard-tr-broadcast}}, 2017.

\bibitem{soh2014broadcast}
K.~W. Soh and K.~M. Koh.
\newblock Broadcast domination in graph products of paths.
\newblock {\em Australas. J. Combin.}, 59:342--351, 2014.

\bibitem{soh2015broadcast}
Kian~Wee Soh and Khee-Meng Koh.
\newblock Broadcast domination in tori.
\newblock {\em Trans. Comb.}, 4(4):43--53, 2015.

\end{thebibliography}

\end{document}